\documentclass[12pt]{amsart}
\usepackage[applemac]{inputenc}
\usepackage[top=3cm, bottom=2cm, left=3cm, right=3cm]{geometry}                
\geometry{letterpaper}                   
\usepackage{etex}
\usepackage{graphicx}
\usepackage{tabulary}
\setlength{\textfloatsep}{0.1cm}
\usepackage{tabu}
\usepackage{longtable}
\usepackage{multirow}
\usepackage{booktabs}
\usepackage{amssymb}
\usepackage{tabu}
\usepackage{epstopdf}
\usepackage{appendix}
\usepackage{arydshln}
\usepackage{setspace}
\usepackage{amsmath}
\usepackage{cancel}
\usepackage{afterpage}
\usepackage{rotating}
\usepackage{pdflscape}
\usepackage{color}
\usepackage{chngcntr}
\setcounter{figure}{0}
\newtheorem{theo}{Theorem}
\newtheorem*{theos}{Theorem}
\usepackage{caption}

 \theoremstyle{plain}

\newtheorem{lem}[theo]{Lemma}

\newtheorem*{cors}{Corollary}
\newtheorem{prop}[theo]{Proposition}

\def\C{\mathbb C}
\def\F{\mathbb F}

\def\Z{\mathbb Z}

\def\bG{\mathbf G}

\def\bT{\mathbf T}

\def\bL{\mathbf L}

\def\cC{\mathcal C}

\def\cC{\mathcal C}

\def\cS{\mathcal S}

\def\rA{\mathrm A}
\def\rB{\mathrm B}
\def\rC{\mathrm C}
\def\rD{\mathrm D}

\def\rF{\mathrm F}
\def\rG{\mathrm G}

\def\fC{\mathfrak C}

\def\ua{{\underline a}}
\def\ba{{\bar a}}
\def\ub{{\underline b}}

\def\uh{{\underline h}}
\def\ut{{\underline t}}
\def\ux{{\underline x}}

\def\ult{{\leadsto_u}}
\def\Wlt{{\leadsto_W}}
\def\GL{\operatorname{GL}}

\def\Ind{\operatorname{Ind}}

\def\Irr{\operatorname{Irr}}

\def\Sp{\operatorname{Sp}}
\def\SU{\operatorname{SU}}
\def\SO{\operatorname{SO}}

\def\Tr{\operatorname{Tr}}

\def\exp{\operatorname{exp}}
\counterwithout{table}{section}
\usepackage{tikz}

\def\DynkinArrowLength{3mm}
\usetikzlibrary{arrows,decorations.markings}
\usetikzlibrary{arrows,decorations.markings}
\usetikzlibrary{shapes}
\usetikzlibrary{matrix}
\usetikzlibrary{graphs}
\usetikzlibrary{backgrounds}

\title{On the decomposition numbers of $\SO_8^+(2^f)$}
\author{Alessandro Paolini}

\address{FB Mathematik, TU Kaiserslautern, Postfach 3049, 67653 Kaiserslautern, Germany.} \email{paolini@mathematik.uni-kl.de}

\thanks{Date: \today. \\
2010 \emph{Mathematics Subject Classification}. Primary 20C33, 20C15; Secondary 20G40. \\
\emph{Key words and phrases}: decomposition numbers, fusion of conjugacy classes, bad primes.\\
The author acknowledges financial support from the ERC Advanced Grant 291512 and the SFB-TRR 195.
}

\begin{document}

\maketitle

\begin{abstract} 
Let $q=2^f$, and let $G=\SO_8^+(q)$ and $U$ be 
a Sylow $2$-subgroup of $G$. We 
first describe the fusion of the conjugacy classes of $U$ in $G$. 
We then use this information to prove the unitriangularity of the 
$\ell$-decomposition matrices of $G$ for all $\ell \ne 2$ by inducing certain irreducible characters 
of $U$ to $G$; the characters of $U$ of degree $q^3/2$ play here a major role. 
We then determine the $\ell$-decomposition matrix 
of $G$ in the case $\ell \mid q+1$, when $\ell \ge 5$ and $(q+1)_\ell>5$, up to two non-negative indeterminates 
in one column. 
\end{abstract}

\section{Introduction} \label{sec:intro}

A fundamental problem in the representation theory of 
a finite group of Lie type $G$ defined over the field $\F_q$ is to determine its decomposition 
numbers in cross-characteristic $\ell$, where $\ell$ divides the order of $G$. 
These numbers relate the ordinary representations of $G$ 
with its modular representations over a field of characteristic $\ell$. 
The unipotent characters of $G$ play here a 
major role, namely if $\ell$ is large enough then 
they form a basic set for the union of 
the unipotent $\ell$-blocks \cite{GH91, Gec93}. 
A long-standing conjecture which is in general wide open \cite[(3.4)]{GH97a}
states that the restrictions of the $\ell$-decomposition matrices to the set of unipotent 
characters have unitriangular shape.

There has been significant progress towards a determination of the decomposition numbers of $G$ over the last three 
decades. 
Let $q$ be a power of a prime $p$ with $p \ne \ell$. 
The case $\ell \mid q-1$ has essentially been solved 
by means of the decomposition matrices of 
$q$-Schur algebras \cite{GH97b} and by investigating 
source algebras \cite{Pui90}; similar methods give the decomposition numbers of $\rA_n$ 
for small $n$ \cite{Jam90}. Several authors then contributed to determine %the determination of 
the decomposition numbers 
of groups of small rank \cite{Him11, HH13, HN14, HN15, His89, OW98, OW02, Wak04, Whi90a, Whi90b, Whi95, Whi00}, 
for instance 
by using the Green correspondence and by investigating 
the Loewy structure of certain indecomposable modules. In the cases where $p$ is a 
bad prime for $G$, the knowledge of the character tables of parabolic subgroups is 
crucial. 

We embark in this work on the project of determining 
the unitriangularity of $\ell$-decomposition 
matrices of $G$ when $p$ is a bad prime, independently of 
the knowledge of the character tables of parabolic subgroups. 
The main result confirms the conjecture in 
\cite[(3.4)]{GH97a} in the case of $\SO_8^+(2^f)$. 

\begin{theos}
Let $\ell \ne 2$ be a prime number. Then the restrictions of the  $\ell$-decomposition matrices 
of $\SO_8^+(2^f)$ to the set of unipotent characters have unitriangular shape. 
\end{theos}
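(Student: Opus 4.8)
The plan is to produce, for every unipotent character $\chi$ of $G=\SO_8^+(q)$, a projective $\overline{\Z}_\ell G$-module realised as an induced module $\Ind_U^G(\lambda)$ with $\lambda\in\Irr(U)$ chosen so that the multiplicities $\langle\Ind_U^G(\lambda),\chi'\rangle_G=\langle\lambda,\chi'|_U\rangle_U$, as $\chi'$ ranges over the unipotent characters, exhibit a unitriangular pattern. By design this uses only the character theory of $U$ and the fusion of $U$-classes in $G$, not the character tables of parabolic subgroups. The first point is projectivity: since $\ell\ne 2=p$, the Sylow $2$-subgroup $U$ has order prime to $\ell$, so every $kU$-module over a field $k$ of characteristic $\ell$ is projective; hence $\Ind_U^G(M)$ is a projective $kG$-module for every $kU$-module $M$, and the same holds over $\overline{\Z}_\ell$. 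Consequently, for each $\lambda\in\Irr(U)$ the character $\Ind_U^G(\lambda)$ is afforded by a projective lattice, so if $\Ind_U^G(\lambda)=\sum_\varphi a_\varphi\Phi_\varphi$ is its expansion into projective indecomposables then $a_\varphi\in\Z_{\ge0}$ and $\langle\Ind_U^G(\lambda),\chi'\rangle=\sum_\varphi a_\varphi\, d_{\chi'\varphi}$.

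Next I would assemble the input data: (i) the classification of $\Irr(U)$, isolating the characters of degree $q^3/2$ alongside the linear and lower-degree ones; (ii) the fusion of the conjugacy classes of $U$ in $G$ established earlier in the paper; and (iii) the values of the unipotent characters of $\SO_8^+(q)$ on the unipotent classes, available from Deligne--Lusztig theory and the Green functions in type $D_4$. By Frobenius reciprocity these determine every inner product $\langle\lambda,\chi'|_U\rangle_U$ explicitly as a polynomial in $q$. The heart of the argument is then to order the unipotent characters compatibly with the $\mathbf a$-function (or any order for which the following holds) and, for each unipotent $\chi$, to exhibit $\lambda_\chi\in\Irr(U)$ with $\langle\lambda_\chi,\chi|_U\rangle=1$ and $\langle\lambda_\chi,\chi'|_U\rangle=0$ for every unipotent $\chi'$ strictly above $\chi$. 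For the "shallow" unipotent characters one can take $\lambda_\chi$ linear or inflated from a small quotient of $U$; for the "deep" ones — the Steinberg character in particular — the characters of degree $q^3/2$ are indispensable, since these are precisely the irreducible characters of $U$ non-trivial on the relevant term of the lower central series and so capable of separating the unipotent characters that coarser constructions cannot reach. The order-$3$ graph automorphism (triality) of $G$ cuts down the casework, permuting several unipotent characters and their associated $\lambda_\chi$ in orbits.

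To conclude: since $\ell$ is odd, hence good for $G$, the unipotent characters form a basic set for the union of the unipotent $\ell$-blocks, so the submatrix $D_{\mathrm{un}}=(d_{\chi\varphi})$ indexed by the unipotent characters is square. Feeding the triangular system above into the projectivity statement and inducting down the chosen order, non-negativity of the $a_\varphi$ and of the $d_{\chi'\varphi}$ forces $\Ind_U^G(\lambda_\chi)$ to involve a single projective indecomposable $\Phi_{\varphi_\chi}$ with $d_{\chi\varphi_\chi}=1$ and $d_{\chi'\varphi_\chi}=0$ for $\chi'$ above $\chi$; these columns make $D_{\mathrm{un}}$ unitriangular, which is the assertion. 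The main obstacle is the triangular system: one must locate the correct $\lambda_\chi$ among the many irreducible characters of $U$ — especially within the degree-$q^3/2$ family — and verify the vanishing of all the "higher" inner products uniformly in $q$; everything else is either general ($\ell'$-induction, basic sets) or bookkeeping. A minor additional point is to confirm that the basic-set statement holds for every odd $\ell$, including the slightly exceptional $\ell=3$ for type $D_4$.
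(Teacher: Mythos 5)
Your proposal is essentially the paper's own argument: the paper likewise induces suitably chosen irreducible characters of the Sylow $2$-subgroup $U$ (linear ones for most unipotent characters, the degree-$q^3/2$ characters for $\chi_6,\dots,\chi_9$, exploiting triality to reduce casework), uses that such induced characters are $\ell$-projective since $\ell\ne 2$, computes the multiplicities from the fusion of $U$-classes in $G$ together with the known values of $\Irr(U)$ and of the unipotent characters on unipotent classes, and concludes unitriangularity of the decomposition matrix on the basic set of unipotent characters. The explicit work you defer as ``bookkeeping'' --- determining the fusion, evaluating the family sums, and exhibiting the triangular matrix of inner products (Table \ref{tab:decn}) --- is precisely the content of Sections \ref{sec:fusio} and \ref{sec:unitr}, so the only caveat is your overly strong phrasing that each induced character involves a single projective indecomposable, which is neither true nor needed: unitriangularity of $(\langle\chi_i,\psi_j\rangle)$ with the $\psi_j$ projective already forces the decomposition matrix to be unitriangular.
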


Let us now denote by $G$ the group $\SO_8^+(2^f)$, and let $U$ be a Sylow 
$2$-subgroup of $G$. The knowledge of the irreducible characters of $U$ and 
their values \cite{HLM11, LM15, GLM17} is our main ingredient to obtain the unitriangularity of the $\ell$-decomposition matrices of $G$. 
Namely we first compute the fusion of the conjugacy classes of $U$ in $G$, and 
then we use this information to construct $\ell$-projective characters by inducing 
irreducible characters of $U$ to $G$. The key fact is that the $\ell$-projective characters $\Psi_6, \dots, \Psi_9$ and $\Psi_{13}$ of $\SO_8^+(p^f)$ with $p \ge 3$ as in \cite[Section 5]{GP92}, 
constructed in terms of generalized Gelfand-Graev characters and hence defined just for $p$ odd, 
are here replaced by inducing irreducible characters of $U$ of degree $q^3/2$; this is the only irreducible 
character degree in a Sylow $p$-subgroup of $\SO_8^+(p^f)$ for any $p\ge2$ which is not a power of $p^f$. 

We then move on to computing the $\ell$-decomposition matrix of $G$ in the only remaining open case 
%when $\ell \ge 5$, namely the case 
$\ell \mid q+1$, under the assumption $\ell \ge 5$. We %when the prime $\ell$ is large enough, that is $\ell \mid q+1$. We 
provide exact values of its entries when $(q+1)_\ell>5$, except in one column, where such entries 
are determined as linear expressions involving two non-negative parameters $\alpha \le q/2$ 
and $\beta \le (3q-2)/2$. Most columns are obtained by Harish-Chandra induction 
from Levi subgroups. We obtain lower bounds for the $\ell$-decomposition numbers by using 
properties of irreducible $\ell$-characters in general position of some twisted torus. Upper bounds, 
which are in some cases sharp, are obtained by a character-theoretical consequence of \cite[Section 8]{BR03} 
stated in \cite[Lemma 1.1]{Dud13}, which is proved by deep cohomological methods; these methods 
have also been used to compute the decomposition numbers in types $\rB_3$ and $\rC_3$ \cite{HN14}, 
for unitary groups of low rank \cite{DM15}, and 
for exceptional groups of Lie type when $\ell \mid q^2+1$ \cite{DM16}. As these methods do not depend 
on $q$, the $\ell$-decomposition matrix of $G$ when $\ell \mid q+1$ is the same as in the case of $\SO_8^+(p^f)$ 
when $p \ge 3$. 

\begin{cors}
Let $q=2^f$, and let $\ell \ge 5$ be a prime number such that 
$\ell \mid q+1$ and $(q+1)_\ell>5$. Then the restriction of the $\ell$-decomposition 
matrix of $\SO_8^+(q)$ to the set of unipotent characters is as given in Table \ref{tab:finald}. 
\end{cors}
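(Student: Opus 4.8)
The plan is to settle the columns of the restricted decomposition matrix one at a time, reading off most of them from the decomposition into unipotent characters of an explicit projective character of $G$ and squeezing the single remaining one between a lower and an upper bound that agree outside two positions. For the set-up: since $\ell\ge 5$ it is an odd, good prime for $D_4$, and $\ell\mid q+1$ forces the multiplicative order of $q$ modulo $\ell$ to be $2$; hence the unipotent $\ell$-blocks are governed by $2$-Harish-Chandra theory with respect to the $\Phi_2$-Sylow torus, which is a maximal torus of order $(q+1)^4$ with relative Weyl group $W(D_4)$, the principal block carrying the large family of $D_4$. By \cite{GH91,Gec93} the unipotent characters of $G$ form a basic set of the union of the unipotent $\ell$-blocks, so the restricted decomposition matrix is square, and by the Theorem above it is lower unitriangular for an ordering refining Lusztig's $a$-function. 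It therefore suffices to compute its columns.

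\emph{Columns from projective characters.} For each $F$-stable Levi subgroup $L$ of an $F$-stable parabolic of $G$ — namely the maximally split torus and, up to conjugacy, the Levi subgroups of types $\rA_1$, $\rA_1^2$, $\rA_1^3$, $\rA_2$ and $\rA_3$ — the group $L^F$ is a central product of general linear groups with a smaller special orthogonal group, whose $\ell$-decomposition matrix for $\ell\mid q+1$ is already available along with its projective indecomposable characters; Harish-Chandra inducing these to $G$ yields projective characters of $G$ whose constituents are computed from Lusztig's rules for Harish-Chandra induction of unipotent characters and from the $\ell$-block distribution. Combined with the projective characters of $G$ produced in the proof of the Theorem by inducing the degree-$q^3/2$ characters of $U$ (which play the role of the generalized Gelfand--Graev constructions $\Psi_6,\dots,\Psi_9$ and $\Psi_{13}$ of \cite{GP92}), and using the unitriangular shape to peel off indecomposable summands, this pins down every column except the one attached, via the basic set, to the cuspidal unipotent character of $D_4$.

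\emph{The last column.} This cuspidal character occurs in no proper Harish-Chandra induction, so its column must be bracketed directly. For a lower bound I would take an irreducible Deligne--Lusztig character $\pm R_T^G(\theta)$, with $T$ the twisted maximal torus of order $(q+1)^4$ and $\theta$ an $\ell$-character in general position: such a character is irreducible, its $\ell$-reduction is a projective character, and pairing it with the unipotent characters through the known multiplicities $\langle R_T^G(\theta),\chi\rangle$ forces each entry of the column from below. For a matching upper bound I would invoke the character-theoretic consequence of \cite[Section 8]{BR03} recorded in \cite[Lemma 1.1]{Dud13}: the $\ell$-adic cohomology of the relevant Deligne--Lusztig variety realises a submodule of the projective cover, capping the multiplicities from above. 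The hypothesis $(q+1)_\ell>5$ is exactly what is needed to discard the few small cases in which these cohomological estimates are inconclusive. Finally, since none of the ingredients — Harish-Chandra induction, generic degrees, $2$-Harish-Chandra theory, the cohomology of Deligne--Lusztig varieties — is sensitive to the parity of $q$, the resulting array coincides entry by entry with the $\ell$-decomposition matrix of $\SO_8^+(p^f)$ for $p$ odd computed in \cite[Section 5]{GP92}, which is what is displayed in Table~\ref{tab:finald}.

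\emph{Where the difficulty lies.} The crux is the last step: the lower and upper bounds meet in all but two positions, so the column attached to the cuspidal unipotent character of $D_4$ is determined only up to the non-negative parameters $\alpha\le q/2$ and $\beta\le(3q-2)/2$ of Table~\ref{tab:finald}; removing this ambiguity would require information on the cohomology of the Deligne--Lusztig varieties of $D_4$ going beyond what \cite{BR03} provides. A secondary technical point is the careful bookkeeping in the previous step — controlling exactly which indecomposable projectives are detached by Harish-Chandra induction and by the $U$-induced characters, so that precisely one column is left for the bounding argument.
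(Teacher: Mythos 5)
Your proposal follows essentially the same route as the paper: unitriangularity plus the basic-set property, Harish-Chandra induction of PIMs from the type-$\rA_3$ Levi subgroups (using James's matrices) for most columns, the $U$-induced degree-$q^3/2$ characters for $\psi_6,\dots,\psi_9$, lower bounds from $\pm R_w(\theta)$ with $\theta$ in general position (hence the hypothesis $(q+1)_\ell>5$), upper bounds from \cite[Lemma 1.1]{Dud13}, and a residual two-parameter ambiguity in the column of the cuspidal character $\chi_9$. The only slight imprecision is in the last step: the bounds $\alpha\le q/2$ and $\beta\le(3q-2)/2$ actually come from the explicit induced character $\psi_9$ of Table \ref{tab:decn} together with the inequality $\beta\le 3\alpha-1$, while the cohomological method (DL) applied to Coxeter and length-$6$ elements serves to force $\delta=4$ in $\Psi_{13}$ and the exact relation $\gamma=-9\alpha+4\beta+8$, rather than to cap $\alpha$ and $\beta$ directly.
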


Notice that the matrix block of size $4$ which is indexed by the unipotent characters $\chi_i$ for $i=6, \dots, 9$, whose degrees are 
of the form $\pi_i(q)q^3/2$ for some products $\pi_i(q)$ of cyclotomic polynomials evaluated at $q$, and by 
the $\ell$-projective characters $\psi_6, \dots, \psi_9$, obtained by inducing characters of $U$ of degree $q^3/2$, is the 
only triality-stable block as in \cite[(3.4)]{GH97a} of size greater than $1$, and is in fact 
the identity block of size $4$. 
A direction for future work is to investigate in a similar way the induction of irreducible characters of degree $q^7/2$ 
in a Sylow $2$-subgroup of $\rD_5(q)$ for $q=2^f$, and of degree $q^4/3$ in a Sylow $3$-subgroup of $\rF_4(q)$ for $q=3^f$, whose construction is determined 
in \cite{Pao16} and \cite{GLMP16} respectively, to get information on the shape of the $\ell$-decomposition matrices in these cases. 
Such character 
degrees turn out to be again exactly the non-cyclotomic parts of the degrees 
of certain unipotent characters of $\rD_5(q)$ and $\rF_4(q)$ respectively, see \cite[\S\S13.8-13.9]{Car}. 

The structure of this work is as follows. We recall in Section \ref{sec:preli} some preliminary 
results on character theory and finite groups of Lie type, in particular about $\SO_8^+(2^f)$. 
In Section \ref{sec:fusio} we determine the fusion in $\SO_8^+(2^f)$ of the conjugacy classes 
of a fixed Sylow $2$-subgroup. This is used in Section \ref{sec:unitr} to find 
$\ell$-projective characters of $\SO_8^+(2^f)$ and to determine the unitriangularity of its 
$\ell$-decomposition matrices. We determine in Section \ref{sec:decnu} more precisely the 
$\ell$-decomposition matrix of $\SO_8^+(2^f)$ when $\ell \mid q+1$. Finally, we collect in the Appendix 
the details of the computations in Section \ref{sec:fusio}. Further computations related to Sections \ref{sec:unitr} and \ref{sec:decnu} are available on the webpage of the author. 

\begin{center}
\begin{table}[t]
\begin{tabular}{|c|cccccccccccccc|}
\hline
& $\Psi_1$ & $\Psi_2$ & $\Psi_3$ & $\Psi_4$ & $\Psi_5$ & $\Psi_6$ & $\Psi_7$ & $\Psi_8$& $\Psi_9$ & $\Psi_{10}$ & $\Psi_{11}$ & $\Psi_{12}$ & $\Psi_{13}$ & $\Psi_{14}$ \\
\hline
$\chi_1$
& $1$
& \multicolumn{1}{:c}{$\cdot$}
& $\cdot$
& $\cdot$
& $\cdot$
& $\cdot$
& $\cdot$
& $\cdot$
& $\cdot$
& $\cdot$
& $\cdot$
& $\cdot$
& $\cdot$
& $\cdot$
\\
\cdashline{1-3}
$\chi_2$ 
& $2$
& \multicolumn{1}{:c}{$1$}
& \multicolumn{1}{:c}{$\cdot$}
& $\cdot$
& $\cdot$
& $\cdot$
& $\cdot$
& $\cdot$
& $\cdot$
& $\cdot$
& $\cdot$
& $\cdot$
& $\cdot$
& $\cdot$
\\
\cdashline{1-1} \cdashline{3-6}
$\chi_3$ 
& $1$
& $1$
&\multicolumn{1}{:c}{$1$} 
& $\cdot$
& $\cdot$
& \multicolumn{1}{:c}{$\cdot$}
& $\cdot$
& $\cdot$
& $\cdot$
& $\cdot$
& $\cdot$
& $\cdot$
& $\cdot$
& $\cdot$
\\
$\chi_4$ 
& $1$
& $1$
&\multicolumn{1}{:c}{$0$} 
& $1$
& $\cdot$
& \multicolumn{1}{:c}{$\cdot$}
& $\cdot$
& $\cdot$
& $\cdot$
& $\cdot$
& $\cdot$
& $\cdot$
& $\cdot$
& $\cdot$
\\
$\chi_5$ 
& $1$
& $1$
&\multicolumn{1}{:c}{$0$} 
& $0$
& $1$
& \multicolumn{1}{:c}{$\cdot$}
& $\cdot$
& $\cdot$
& $\cdot$
& $\cdot$
& $\cdot$
& $\cdot$
& $\cdot$
& $\cdot$
\\
\cdashline{1-1} \cdashline{4-10}
$\chi_6$ 
& $0$
& $1$
& $1$
& $1$
& $1$
& \multicolumn{1}{:c}{$1$}
& $\cdot$
& $\cdot$
& $\cdot$
& \multicolumn{1}{:c}{$\cdot$}
& $\cdot$
& $\cdot$
& $\cdot$
& $\cdot$
\\
$\chi_7$ 
& $2$
& $2$
& $1$
& $1$
& $1$
& \multicolumn{1}{:c}{$0$}
& $1$
& $\cdot$
& $\cdot$
& \multicolumn{1}{:c}{$\cdot$}
& $\cdot$
& $\cdot$
& $\cdot$
& $\cdot$
\\
$\chi_8$ 
& $ 0 $
& $0$
& $0$
& $0$
& $0$
& \multicolumn{1}{:c}{$0$}
& $0$
& $1$
& $\cdot$
& \multicolumn{1}{:c}{$\cdot$}
& $\cdot$
& $\cdot$
& $\cdot$
& $\cdot$
\\
$\chi_9$ 
& $0$
& $0$
& $0$
& $0$
& $0$
& \multicolumn{1}{:c}{$0$} 
& $0$
& $0$
& $1$
& \multicolumn{1}{:c}{$\cdot$}
& $\cdot$
& $\cdot$
& $\cdot$
& $\cdot$
\\
\cdashline{1-1} \cdashline{7-13}
$\chi_{10}$ 
& $1$
& $1$
& $1$
& $1$
& $1$
& $1$ 
& $1$
& $0$
& $\alpha$
& \multicolumn{1}{:c}{$1$}
& $\cdot$
& $\cdot$
& \multicolumn{1}{:c}{$\cdot$}
& $\cdot$
\\
$\chi_{11}$ 
& $1$
& $1$
& $1$
& $1$
& $1$
& $1$ 
& $1$
& $0$
& $\alpha$
& \multicolumn{1}{:c}{$0$}
& $1$
& $\cdot$
& \multicolumn{1}{:c}{$\cdot$}
& $\cdot$
\\
$\chi_{12}$ 
& $1$
& $1$
& $1$ 
& $1$
& $1$
& $1$ 
& $1$
& $0$
& $\alpha$
& \multicolumn{1}{:c}{$0$}
& $0$
& $1$
& \multicolumn{1}{:c}{$\cdot$}
& $\cdot$
\\
\cdashline{1-1} \cdashline{11-14}
$\chi_{13}$ 
& $2$
& $1$
& $1$
& $1$
& $1$
& $1$ 
& $2$ 
& $0$
& $\beta$
& $1$
& $1$
& $1$
& \multicolumn{1}{:c}{$1$}
& \multicolumn{1}{:c|}{$\cdot$}
\\
\cdashline{1-1} \cdashline{14-15}
$\chi_{14}$
& $1$
& $0$
& $0$
& $0$
& $0$
& $1$ 
& $1$
& $0$
& $\gamma$
& $1$
& $1$
& $1$
& $4$
& \multicolumn{1}{:c|}{$1$}
\\
\hline
\end{tabular}
\caption{The $\ell$-decomposition matrix of $G$ for $\ell \mid q+1$, when $\ell \ge 5$ and $(q+1)_\ell>5$. 
We have that $\gamma=-9\alpha+4\beta+8$, and that $\alpha \le q/2$, $\beta 
\le (3q-2)/2$ and $-9\alpha+4\beta+8 \ge 0$.}
\label{tab:finald}
\end{table}
\end{center}

\noindent
\textbf{Acknowledgement:} The author deeply thanks G. Malle for very helpful 
feedback and comments on an earlier version of the work, and O. Dudas for 
further comments and suggestions. 

\section{Preliminaries} \label{sec:preli}

We recall first some notation for characters and 
conjugacy classes of a finite group $G$ and its 
subgroups. Our main reference for this is \cite{Is}. We denote by $\Irr(G)$ the set of 
irreducible characters of $G$. Let $H$ be a subgroup of $G$. For 
a character $\chi$ of $H$, we denote by $\Ind_H^G \chi$, 
or more simply $\chi^G$, the character obtained by 
inducing $\chi$ to $G$. If $g, h \in G$, we denote by 
$h^g$ the element $g^{-1}hg$. In this way, the set 
$h^G$ denotes the set of all elements of the form 
$h^g$ when $g \in G$, that is, the conjugacy class 
of $G$ containing $h$. 

Let now $\varphi \in \Irr(H)$, and let $\{h_1, 
\dots, h_m\}$ be a full set of representatives of the set $\{h^g \mid h \in H \text{ and } g \in G\}$ for the 
conjugation action in $G$. The information of 
the fusion of the conjugacy classes of $H$ into $G$ 
is enough to compute the character values of $\varphi^G$ on each $h_k^g$ with $k=1, \dots, m$ and $g \in G$. 
More in details, if 
$g \in h_k^G$ and 
$$h_k^G \cap H = (h_k^1)^H \sqcup \dots \sqcup (h_k^{\ell(k)})^H,$$
with $h_k^1, \dots, h_k^{\ell(k)}$ a complete 
set of representatives of $H$-conjugacy classes 
in $h_k^G \cap H$, then as remarked in \cite[Section 5]{Is} we have 
\begin{equation}\label{eq:isa}
\varphi^G(g)= |C_G(g)| \sum_{i=1}^{\ell(k)} \frac{\varphi(h_k^{i})}{|C_H(h_k^{i})|}.
\end{equation}

We now recall the definition of the group $\SO_8(2^f)$ and some of its properties, following \cite{MT}. 
Let $k$ be an algebraically closed field of characteristic 
$2$, and for a fixed $f$ let $q=2^f$ and let $F$ be the 
standard Frobenius morphism on $k$. Hence the set of 
fixed points of $F$ on $k$ is the field $\F_q$ with $q$ elements. 
Let $\bG$ be a 
simple algebraic group of type $\rD_4$ defined over $k$. 
The Frobenius morphism $F$ also acts on $\bG$ through a 
standard linear embedding. From now on, $G$ is defined to be the group $\SO_8^+(q)$ of fixed points of $\bG$ under $F$. The group $G$ is then a finite 
Chevalley group of order 
$$|G|=q^{12}\phi_1^4\phi_2^4\phi_3\phi_4^2\phi_6.$$
Here we put $\phi_i:=\phi_i(q)$, where 
$\phi_i(x)$ denotes the $i$-th cyclotomic 
polynomial. 

Let us fix a maximally split torus $\bT$ of $\bG$, and let $T=\bT^F$. We denote by $\Phi$ the 
root system of $\bG$ with respect to $\bT$, and by $\Phi^+=\{\alpha_1, \dots, \alpha_{12}\}$ 
the set of positive roots, with $\alpha_1, \dots, \alpha_4$ simple roots. 
The set $\Phi$ is acted on by the Weyl group $W=N_\bG(\bT)/\bT$, which coincides in this case with 
$N_G(T)/T$. We denote by $s_1, \dots, s_4$ the set of 
standard generators for $W$ such that $s_i(\alpha_i)=-\alpha_i$ for $i=1, \dots, 4$. 
Let $U$ be a fixed Sylow $2$-subgroup of $G$. We denote by $X_\alpha$ the root subgroup 
of $U$ with respect to a root $\alpha \in \Phi^+$, and by $x_\alpha(t)$ the root element 
corresponding to $\alpha \in \Phi^+$ and $t \in \F_q$. If $i \in \{1, \dots, 12\}$, we often 
write $X_i$ instead of $X_{\alpha_i}$, and $x_i(t)$ in place of $x_{\alpha_i}(t)$ for 
$t \in \F_q$. 

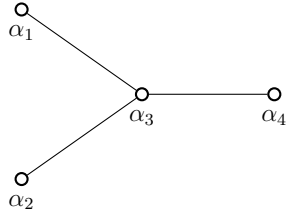
\begin{figure}[h]
\begin{center}
\begin{tikzpicture}[transform shape, scale=0.8, place/.style={circle,draw=black,fill=black, tiny},middlearrow/.style={
    decoration={markings,
      mark=at position 0.6 with
      {\draw (0:0mm) -- +(+135:\DynkinArrowLength); \draw (0:0mm) -- +(-135:\DynkinArrowLength);},
    },
    postaction={decorate}
  }, dedge/.style={
    middlearrow,
    double distance=0.5mm,
  }]
  \node (a) at (-2,1.41) [circle, draw, thick, fill=none, inner sep=2pt,label=below:$\alpha_1$] {};
  \node (b) at (-2,-1.41) [circle, draw, thick, fill=none, inner sep=2pt,label=below:$\alpha_2$] {};
    \node (c) at (0,0) [circle, draw, thick, fill=none, inner sep=2pt,label=below:$\alpha_3$] {};
  \node(d) at (2.2,0) [circle, draw, thick, fill=none, inner sep=2pt,label=below:$\alpha_4$] {};
    \draw (a) -- (c);
  \draw (b) -- (c) ;
  \draw (c) -- (d) ;
\end{tikzpicture}
\caption{The Dynkin diagram of type $\rD_4$. Simple roots are labelled as in CHEVIE.}
\label{tab:E6}
\end{center}
\end{figure}

We recall some of the Chevalley relations in $G$, as in \cite[Theorem 1.12.1]{GLS3}. 
Every element of $T$ can be described as $\uh(\ut):=h_1(t_1)\cdots h_4(t_4)$ with $t_1, \dots, t_4 \in \F_q^\times$, 
for certain $h_i: \F_q^\times \to T$ for $i=1, \dots, 4$. The action of $\uh(\ut)$ on
an element $\ux(\ua):=x_3(a_3) x_1(a_1) \cdots x_{12}(a_{12})$ of $U$, 
for $t_1, \dots, t_4, a_1, \dots, a_{12} \in \F_q^{\times}$, is given by 
\begin{align}\label{eq:al3}
\ux (\ua) ^{\uh(\ut)}=&x_3(a_3t_1t_2t_4/t_3^2)x_1(a_1t_3/t_1^2)
 x_2(a_2t_3/t_2^2)x_4(a_4t_3/t_4^2)
x_5(a_5t_2t_4/t_1t_3)\cdot \\
& 
x_6(a_6t_1t_4/t_2t_3)x_7(a_7t_1t_2/t_3t_4)x_8(a_8t_4/t_1t_2)x_9(a_9t_2/t_1t_4)\cdot \nonumber \\ 
&x_{10}(a_{10}t_1/t_2t_4) x_{11}(a_{11}t_3/t_1t_2t_4)
x_{12}(a_{12}/t_3). \nonumber
\end{align}

Since $q$ is a $2$-power, we have that 
\begin{equation}\label{eq:Wact}
x_{\alpha}(t)^{\dot{w}}=x_{w(\alpha)}(t)
\end{equation}
for $\alpha \in \Phi^+$, $\dot{w}$ a lift in $N_G(T)$ of $w \in W$ and $t \in \F_q$.  
Moreover, we use the general formula 
$$
[x_{\alpha}(s), x_{\beta}(t)]=\prod_{\substack{i, j \in \mathbb{Z}_{>0} \mid  i\alpha+j\beta \in \Phi^+}}x_{i\alpha+j\beta}(
c_{i,j}^{\alpha,\beta}(-t)^is^j)
$$
for $\alpha, \beta \in \Phi^+$ and $s, t \in \F_q$, to obtain  
\begin{equation}\label{eq:Uact}
[x_{\alpha}(s), x_{\beta}(t)]=
\begin{cases}
x_{\alpha+\beta}(st) & \text{ if }\alpha+\beta \in \Phi^+, \\
0 & \text{ otherwise. }
\end{cases}
\end{equation}

We denote by $\Tr:\F_q \to \F_2$ the trace map of the field $\F_q$ 
over its prime field $\F_2$. We fix a non-trivial character of the 
abelian group $\F_q$, namely 
$\phi:\F_q \to \C^\times$ such that $\phi(x)=\exp(i\pi\Tr(x))$. We have that 
$\ker(\phi)=\{t^2+t \mid t \in \F_q\}$. 

There are $2q^5+8q^4-16q^3+14q^2-10q+3$ distinct conjugacy 
classes in $U$. These have been obtained in \cite[Section 3]{BG14}; 
we consider in Section \ref{sec:fusio} the same conjugacy class representatives as in \cite[Section 3]{GLM17}. 
On the other hand, there are just $14$ unipotent class 
representatives in $G$, that is, representatives of the $G$-conjugacy 
action on the set $\{u^g \mid u \in U \text{ and } g \in G\}$. These are 
obtained in \cite{GLO17}, and can also be recovered from 
the computations in Section \ref{sec:fusio}. We fix  
$u_1, \dots, u_{14}$ such representatives in $U$. These are collected 
in Table \ref{tab:repU}, together with their centralizer sizes in $G$. 
In particular, we fix there from now on the element 
$\mu \in \F_q^\times$ 
such that $\mu$ is an element with $\Tr(\mu)=1$ if $q>2$, and $\mu=1$ if $q=2$. 
Notice that $u_3$, $u_4$ and $u_5$ 
(respectively $u_9$, $u_{10}$ and $u_{11}$) are 
permuted by the triality automorphism 
$\tau$ of $G$, which is associated to the Dynkin diagram automorphism $(\alpha_1, \alpha_4, \alpha_2)$. As a consequence of 
subsequent calculations, we can determine precisely that 
the classes $u_{13}$ and $u_{14}$ correspond to the 
classes denoted respectively by $u_{13}$ and $u_{13}'$ in \cite[\S 4.5.4]{Gec94}; in fact, by 
swapping the roles of $u_{13}$ and $u_{14}$ we would get negative entries in Table \ref{tab:decn}, which 
of course cannot happen. 

We end this section by recalling some information about 
characters of $U$ and $G$. The set $\Irr(U)$ is described 
in \cite{HLM11} and \cite{GLM17}. Notice that the parametrization 
of the irreducible characters of a Sylow $p$-subgroup of $\SO_8^+(p^f)$ is uniform 
for primes $p \ge 3$, while 
in the case $p=2$ there is some further complication. We denote by $\chi_1, \dots, 
\chi_{14}$ the unipotent characters of $G$. We use the same 
notation as \cite{GP92}, except 
we swap $\chi_7$ and $\chi_8$. By \cite[\S 4.5.4]{Gec94}, the unipotent 
characters of $G$ and their values on $u_1, \dots, u_{12}$ are given in both cases $p\ge 3$ and $p=2$ 
by identical polynomial expressions in %$r$, where $r:=
$p^f$; they are 
determined there, as well as the values on $u_{13}$ and 
$u_{14}$ when $p=2$. 

\begin{center}
\begin{table}[t]
\begin{tabular}{|c|c|c|}
\hline
Class & Representative & Centralizer order \\
\hline
\hline
$u_1$ & $1_G$ & $q^{12}(q^2-1)(q^4-1)^2(q^6-1)$ \\
\hline
$u_2$ & $x_1(1)$ & $q^{12}(q^2-1)^3 $ \\
\hline
$u_3$ & $x_1(1)x_2(1)$ & $q^{10}(q^2-1)(q^4-1)$ \\ 
\hline
$u_4$ & $x_1(1)x_4(1)$ & $q^{10}(q^2-1)(q^4-1)$ \\ 
\hline
$u_5$ & $x_2(1)x_4(1)$ & $q^{10}(q^2-1)(q^4-1)$ \\ 
\hline
$u_6$ & $x_1(1)x_2(1)x_4(1)$ & $q^{10}(q^2-1)$ \\ 
\hline
$u_7$ & $x_3(1)x_1(1)$ & $2q^8(q-1)^2$ \\ 
\hline
$u_8$ & $x_1(1)x_2(1)x_4(1)x_{10}(1)x_{12}(\mu)$ & $2q^8(q+1)^2$ \\ 
\hline
$u_9$ & $x_3(1)x_1(1)x_2(1)$ & $q^6(q^2-1)$ \\ 
\hline
$u_{10}$ & $x_3(1)x_1(1)x_4(1)$ & $q^6(q^2-1)$ \\ 
\hline
$u_{11}$ & $x_3(1)x_2(1)x_4(1)$ & $q^6(q^2-1)$ \\ 
\hline
$u_{12}$ & $x_3(1)x_1(1)x_2(1)x_{10}(1)$ & $q^6$ \\
\hline
$u_{13}$ & $x_3(1)x_1(1)x_2(1)x_4(1)$ & $2q^4$ \\ 
\hline
$u_{14}$ & $x_3(1)x_1(1)x_2(1)x_4(1)x_{10}(\mu)$ & $2q^4$ \\
\hline
\end{tabular}
\caption{The unipotent class representatives in $\SO_8^+(2^f)$%as in \cite[\S 4.5.4]{Gec94}
.} \label{tab:repU}
\end{table}
\end{center}
\vspace{-8mm}

\section{The fusion of the conjugacy classes of $U$ into $G$} \label{sec:fusio}

We now outline a method to determine how the 
conjugacy class representatives of $U$ in \cite[Section 3]{GLM17} fuse into the unipotent classes of $G$. 
We collect such representatives and their fusion into each of the $u_k^G$ for $k=1, \dots, 14$ 
in Table \ref{tab:long}. We make substantial use of the Chevalley relations in $G$ and properties of $\Phi^+$. 
From now on and for the rest of this work, we assume that $q>2$. Namely if $q=2$ then $\ell\in \{3, 5, 7\}$, and 
in these cases the $\ell$-Brauer characters of $\SO_8^+(2)$ are determined in \cite{JLPW}. Notice that for $q=2$ the fusion of 
the conjugacy classes of $U$ in $G$ is slightly different, as $\Tr(0)=\Tr(1)=0$; this is readily described by CHEVIE \cite{GHL+96}, 
and can also be computed by using the methods outlined in this section.

We first describe the action of $T$ on the representatives of the conjugacy classes 
of $U$ in the first row corresponding to $u_k^G$ in Table \ref{tab:long}. 
Namely by using Equation \eqref{eq:al3}, 
for $k=1, \dots, 14$ and each $u \in u_k^G$ in the first row of Table \ref{tab:long} of the form 
$x_{\alpha_{i_1}}(a_{i_1})\cdots x_{\alpha_{i_m}}(a_{i_m})$, 
we can find explicitly  $t_1, \dots, t_4 \in \F_q^\times$ such that 
$$\left( x_{\alpha_{i_1}}(a_{i_1})\cdots x_{\alpha_{i_m}}  (a_{i_m})\right)^{h_1(t_1)h_2(t_2)h_3(t_3)h_4(t_4)}=u_k.$$
This information is collected in Table \ref{tab:reptorus}, with the following notation. 
For fixed $a_i \in \F_q^{\times}$, we denote by $\omega_i=\omega_i(a_i) \in 
\F_q^{\times}$ the unique square root of $a_i$ in $\F_q^{\times}$.  
We omit the trivial case $k=1$ and the cases $k=4$ and $k=5$ (respectively 
$k=10$ and $k=11$), which are obtained by case $k=3$ (respectively 
$k=9$) by applying the triality automorphism $\tau$. 

\begin{center}
\begin{table}[t]
\scalebox{0.95}{
\begin{tabular}{|c|c|c|}
 \hline
$k$ & First row rep. in Table \ref{tab:long} for $u_k^G$   & $\ut \in (\F_q^\times)^4 \mid ( x_{\alpha_{i_1}}(t_{i_1})\cdots x_{\alpha_{i_m}}  (t_{i_m}))^{\uh(\ut)}=u_k$ \\
\hline
\hline
$2$  & $x_1(a_1)$ &  $t_1=\omega_1$, $t_2=t_3=t_4=1$ \\
\hline
$3$  & $x_1(a_1)x_4(a_4) $ &  $t_1=\omega_1$, $t_2=\omega_2$, $t_3=t_4=1$ \\
\hline
$6$  & $x_1(a_1)x_2(a_2)x_4(a_4) $ &  $t_1=\omega_1$, $t_2=\omega_2$, $t_3=1$, $t_4=\omega_4$ \\
\hline
$7$  & $x_3(a_3)x_1(a_1) $ &  $t_1=\omega_1$, $t_2=1/a_3$, $t_3=1$, $t_4=1/\omega_1$ \\
\hline
 \multirow{2}{*}{$8$}  &  $x_1(a_1)x_2(a_2)x_4(a_4)x_{10}(a_{10})\cdot  $ &  $t_1=a_1a_{10}/(\omega_2\omega_4)$, $t_2=a_{10}\omega_1/\omega_4,$ \\
 & $x_{12}(a_1a_{10}^2\mu/(a_2a_4))$ & $t_3=a_1a_{10}^2/(a_2a_4)$, $t_4=a_{10}\omega_1/\omega_2$ \\
\hline
$9$  & $x_3(a_3)x_1(a_1)x_2(a_2) $ &  $t_1=\omega_1$, $t_2=\omega_2$, $t_3=1$, $t_4=1/(\omega_1 \omega_2 a_3)$ \\ 
\hline
 \multirow{2}{*}{$12$}  &  
 \multirow{2}{*}{$x_3(a_3)x_1(a_1)x_2(a_2)x_{10}(a_{10})$} &  $t_1=a_1\omega_3\omega_{10}$, $t_2=\omega_1\omega_2\omega_3\omega_{10},$ \\
 & & $t_3=a_1a_3a_{10}$, $t_4=a_{10}\omega_1/\omega_2$ \\ 
 \hline
$13$  &  
$x_3(a_3)x_1(a_1)x_2(a_2)x_4(a_4)$  &  $t_1=a_1\omega_2 a_3\omega_4$, $t_2=\omega_1 a_2 a_3\omega_4,$ \\
 \cline{1-2}
$14$  &  $x_3(a_3)x_1(a_1)x_2(a_2)x_4(a_4)x_{10}(a_2a_3a_4\mu)$ &   
 $t_3=a_1 a_2 a_3^2 a_4$, $t_4=\omega_1\omega_2 a_3 a_4$ \\ 
\hline
\end{tabular}
}
\caption{The normalizing action of $T$ on the 
representatives in the first row corresponding to $u_k^G$, $k=1, \dots, 14$ in Table \ref{tab:long}.} \label{tab:reptorus}
\end{table}
\end{center}
\vspace{-7mm}

We then give full information on the fusion of the conjugacy classes of $U$ into $G$ 
by using the actions of $W$ and $U$ on the remaining $U$-conjugacy representatives. 
The action of $W$ is 
used to show that $x_{\beta_1}(a_1) \cdots x_{\beta_m}(a_m)$ and 
$x_{\gamma_1}(a_1) \cdots x_{\gamma_m}(a_m)$ are conjugate when 
$w(\beta_1, \dots, \beta_m)=(\gamma_1, \dots, \gamma_m)$, by using 
Equation \eqref{eq:Wact}. Here we consider the action of $W$ on 
$(\Phi^+)^m$ entrywise for some $m \ge 1$; we will see that if $\beta_i+\beta_j \notin \Phi^+$ 
for $1 \le i < j \le m$, or if certain other conditions hold, then we can replace the $m$-tuple 
$(\beta_1, \dots, \beta_m)$ with the set $\{\beta_1, \dots, \beta_m\}$, in other words the order of the roots will not matter. This motivates 
the subsequent notation of $[\beta_1, \dots, \beta_m]$. In some cases, we need to combine 
both actions of $W$ and $U$. %For instance, by Equation \eqref{eq:Uact} we could 
%conjugate $x_{\beta_1}(a_1) \cdots x_{\beta_m}(a_m)$ to 
%$x_{\beta_1}(a_1') \cdots x_{\beta_m}(a_m')x_{\beta_{m+1}}(a_{m+1}')$ 
%if there exists $\alpha \in \Phi^+$ such that $\beta_j+\alpha=\beta_{m+1}$ for 
%some $j \in \{1, \dots, m\}$, and $\beta_i+\alpha \notin \Phi^+$ for every 
%$1 \le j \le m$ with $j \ne i$. 

We now go in more details to determine the fusion 
of the conjugacy class representatives of $U$ in $G$ as in Table \ref{tab:fusiond4}. 
Our strategy is to fix each $G$-conjugacy class representative labelling the first row corresponding to $u_k^G$ in 
Table \ref{tab:fusiond4} %, which is conjugate 
%to some $u_k^G$ as in Table \ref{tab:reptorus} 
for $k=1, \dots, 14$, and to prove that every class representative 
in $U$ is conjugate to one of these $G$-conjugacy class representatives. 

We introduce some 
notation. We denote by $[(\alpha_{i_1}, a_{i_1}), \dots, (\alpha_{i_m}, a_{i_m})]$, 
or in short $[\alpha_{i_1}, \dots, \alpha_{i_m}]$, a set of roots $\alpha_{i_j} \in \Phi^+$ 
corresponding to the generic elements $a_{i_j} \in \F_q^\times$ for $1 \le j \le m$, defined up to the following 
equivalence. 

\begin{itemize}

\item[(i)] Let $\sigma \in \text{Sym}\{i_1, \dots, i_m\}$. 
We identify $[\alpha_{i_1}, \dots, \alpha_{i_m}]$ with $[\alpha_{\sigma(i_1)}, \cdots, \alpha_{\sigma(i_m)}]$, 
that is, we put $[\alpha_{i_1}, \dots, \alpha_{i_m}]=[\alpha_{\sigma(i_1)}, \cdots, \alpha_{\sigma(i_m)}]$, if 
$$x_{i_1}(a_{i_1}) \cdots x_{i_m}(a_{i_m})=x_{\sigma(i_1)}(a_{\sigma(i_1)}) \cdots x_{\sigma(i_m)}(a_{\sigma(i_m)})$$
for every $a_{i_1}, \dots, a_{i_m} \in \F_q^\times$. 

\item[(ii)] We write $[(\alpha_{i_1}, a_{i_1}), \dots, (\alpha_{i_m}, a_{i_m})] \ult [(\alpha_{j_1}, a_{j_1}'), \dots, (\alpha_{j_\ell}, a_{j_\ell}')]$ 
for some $u \in U$, in short $[\alpha_{i_1}, \dots, \alpha_{i_m}]\ult [\alpha_{j_1}, \dots, \alpha_{j_\ell}]$, if %for every $a_{i_1}, \dots, a_{i_m} \in \F_q^\times$, 
there exist $a_{j_1}', \dots, a_{j_\ell}' \in 
\F_q^{\times}$ such that 
$$\left( x_{i_1}(a_{i_1}) \cdots x_{i_m}(a_{i_m}) \right)^u=
x_{j_1}(a_{j_1}') \cdots x_{j_\ell}(a_{j_\ell}').$$

\item[(iii)] We write $[\alpha_{i_1}, \dots, \alpha_{i_m}] \leadsto_W [\alpha_{j_1}, \dots, \alpha_{j_m}]$ 
if 
there exists $w \in W$ such that 
$$w(\alpha_{i_1})=\alpha_{j_1}, \qquad \dots, \qquad w(\alpha_{i_m})=\alpha_{j_m}.$$

\end{itemize}

We remark that if (iii) occurs for $w \in W$, then for every $a_{i_1}, \dots, a_{i_m} \in \F_q^\times$ there exists a lift 
$\dot{w} \in N_G(T)$ of $w$ such that 
$$\left( x_{i_1}(a_{i_1}) \cdots x_{i_m}(a_{i_m}) \right)^{\dot{w}}=
x_{i_1}(a_{i_1})^{\dot{w}}  \cdots x_{i_m}(a_{i_m})^{\dot{w}}=
x_{j_1}(a_{i_1}) \cdots x_{j_m}(a_{i_m}).$$
Finally, we observe that an element of the form 
$ x_{i_1}(a_{i_1}) \cdots x_{i_m}(a_{i_m})$ is conjugate 
in $G$ to some element of the form 
$x_{j_1}(a_{j_1}') \cdots x_{j_\ell}(a_{j_\ell}')$ if 
we can transform $[\alpha_{i_1}, \dots, \alpha_{i_m}]$ into 
$[\alpha_{j_1}, \dots, \alpha_{j_\ell}]$ by using (i), (ii) or (iii) above. 
In this case, 
we will write
$$[\alpha_{i_1}, \dots, \alpha_{i_m}] \leadsto [\alpha_{j_1}, \dots, \alpha_{j_\ell}].$$

The rest of this section is devoted to obtain $[\alpha_{i_1}, \dots, \alpha_{i_m}] 
\leadsto [\alpha_{j_1}, \dots, \alpha_{j_\ell}]$ for each $\alpha_{i_1}, \dots, \alpha_{i_m}$ 
which index a row in Table \ref{tab:fusiond4} corresponding to some $u_k^G$, for $k \in \{1, \dots, 14\}$, and 
$\alpha_{j_1}, \dots, \alpha_{j_\ell}$ indexing the first row associated to $u_k^G$ in Table \ref{tab:fusiond4}. 
%indexing the 
%row associated to $k$ in Table \ref{tab:reptorus}, which corresponds to 
%indexing the first row with respect to $u_k^G$ in Table \ref{tab:fusiond4}. 
This gives already almost all the fusion of the $U$-conjugacy 
representatives into $G$. Just a few representatives are then left to examine; we 
explicitly conjugate them to suitable representatives in the first row corresponding to some $u_k^G$ 
in Table \ref{tab:fusiond4}. 

We make use of the following result, of independent interest. 
We denote by $r$ a power of an arbitrary prime $p$, and by $U(\rA_n(r))$ a Sylow 
$p$-subgroup of $\rA_n(r)$.

\begin{prop}\label{prop:typeA} Let $\Phi$ be a root system of type $\rA_n$, 
with $\alpha_1, \dots, \alpha_n$ simple roots. 
If $\{\alpha_{i_1}, \dots, \alpha_{i_n}\}=\{\alpha_{1}, \dots, \alpha_n\}$, 
then for every $a_{i_1}, \dots, a_{i_n} \in \F_r^\times$ there exist $u \in U(\rA_n(r))$ and $a_{1}, \dots, a_{n} \in \F_r^\times$, 
such that 
$$\left(x_{i_1}(a_{i_1}) \dots x_{i_n}(a_{i_n})\right)^u=x_{1}(a_1) \dots x_{n}(a_{n}).$$
\end{prop}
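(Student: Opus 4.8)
The plan is to pass to the standard matrix model of $\rA_n(r)$, where the statement collapses to a short counting argument. Realize $\rA_n(r)$ as $\operatorname{SL}_{n+1}(\F_r)$, with $U=U(\rA_n(r))$ the group of upper unitriangular matrices and $x_{\alpha_i}(t)=I+tE_{i,i+1}$, where $E_{jk}$ denotes the matrix unit. Given $(a_{i_1},\dots,a_{i_n})$ with $\{i_1,\dots,i_n\}=\{1,\dots,n\}$ and all $a_{i_j}\in\F_r^\times$, let $a_k\in\F_r^\times$ be the coefficient attached to $\alpha_k$. Expanding the product of elementary matrices $P:=x_{i_1}(a_{i_1})\cdots x_{i_n}(a_{i_n})$, every summand that uses two or more of the $E_{i_j,i_j+1}$ is supported strictly above the superdiagonal, so $P$ is upper unitriangular with superdiagonal $(P_{1,2},\dots,P_{n,n+1})=(a_1,\dots,a_n)$; the same holds for $M:=x_1(a_1)\cdots x_n(a_n)$. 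Hence both $P$ and $M$ lie in the set $S:=\{\,g\in U\mid g_{k,k+1}=a_k\text{ for }k=1,\dots,n\,\}$, and $|S|=r^{\binom{n+1}{2}-n}=r^{\binom{n}{2}}$, the entries of $g\in S$ strictly above the superdiagonal being arbitrary. Finding $u\in U$ with $P^u=M$ (and taking the required field elements to be $a_1,\dots,a_n$) will then prove the proposition.

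The heart of the argument is to show that $S$ is a single $U$-conjugacy class; $M$ then lies in the $U$-class of $P$, as desired. First, $U$ permutes $S$ by conjugation: for $g=I+N\in S$ and $u=I+L\in U$ with $N,L$ strictly upper triangular, $u^{-1}gu=I+u^{-1}Nu$, and $u^{-1}Nu-N$ is a sum of products of at least two strictly upper triangular matrices, hence vanishes on the diagonal and on the superdiagonal, so $u^{-1}gu\in S$. Second, every $g\in S$ is a regular unipotent element of $\operatorname{SL}_{n+1}(\F_r)$: the first $n$ rows of $g-I$ have their leftmost nonzero entries (namely $a_1,\dots,a_n$) in the distinct columns $2,\dots,n+1$, so $g-I$ has rank $n$, forcing $\dim\ker(g-I)=1$, i.e. a single Jordan block. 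Since such a $g$ is nonderogatory, $C_U(g)\subseteq\F_r[g]$, and the unipotent elements of $\F_r[g]$ — the polynomials in the nilpotent matrix $g-I$ with constant term $1$ — form a subgroup of order $r^n$; as $C_U(g)$ consists of unipotent elements, $|C_U(g)|\le r^n$. Therefore the $U$-class of $g$ has size $[U:C_U(g)]\ge|U|/r^n=r^{\binom{n+1}{2}}/r^n=r^{\binom{n}{2}}=|S|$, and being contained in $S$ it must equal $S$.

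The only step carrying real content is the numerical coincidence $|U|/r^n=|S|$: the ``generic'' (regular unipotent) $U$-class inside the coset of $[U,U]$ cut out by a fixed nonzero superdiagonal already fills that entire coset — everything else is bookkeeping with elementary matrices. I do not expect a genuine obstacle; the one point to state carefully is the standard fact that the centralizer of a nonderogatory matrix is its polynomial algebra, which is exactly what yields the bound $|C_U(g)|\le r^n$. A variant avoiding this fact would induct on $n$, conjugating by suitable $x_{\alpha_i}(t)$ to normalize the entries of $g-I$ one superdiagonal at a time; but the counting proof is shorter and visibly independent of $r$, which is convenient for the applications in Section \ref{sec:fusio}.
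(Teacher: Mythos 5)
Your proof is correct, but it follows a genuinely different route from the paper's. The paper stays entirely inside the root-group calculus: it first collects the product into the normal form $x_1(a_1)\cdots x_n(a_n)\prod_{j>n}x_j(b_j)$ and then strips the non-simple factors one at a time, conjugating by a single root element $x_s(\epsilon a_k/a_i)$ with $\alpha_s=\alpha_k-\alpha_i$ so that the factor $x_k(a_k)$ disappears at the cost of modifying only factors of higher index; this is an explicit induction, and it is phrased so that it can be reused verbatim for the subquotients of the Sylow subgroup of $\rD_4(q)$ arising later (e.g.\ $U(\rA_4(q))/Z(U(\rA_4(q)))$ in the case $k=12$). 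You instead pass to the unitriangular matrix model and prove the stronger statement that the entire fibre $S$ of $U\to U/[U,U]$ over a point with all coordinates nonzero is a single $U$-class: both sides are regular unipotent with the same superdiagonal, and the bound $[U:C_U(g)]\ge r^{\binom{n}{2}}=|S|$, coming from $C_U(g)\subseteq\F_r[g]$ for a nonderogatory matrix together with the count of unipotent elements in $\F_r[g]$, forces the class to fill $S$. All the steps check out (superdiagonal of the product, invariance of the superdiagonal under conjugation by $U$, rank of $g-I$, the order $r^n$ of the unipotent part of $\F_r[g]$). What each approach buys: yours is conceptually cleaner and identifies the proposition as the statement that the regular unipotent $U$-classes are as large as the abelianization constraint permits, independently of $r$; the paper's is more elementary, produces an explicit conjugating element, and — being purely a commutator computation — transfers without any adjustment to central quotients and to the embedded copies of $U(\rA_n)$ inside the group actually under study, which is how it is exploited in Section \ref{sec:fusio}.
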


\begin{proof}
We can rewrite the product $x_{i_1}(a_{i_1}) \dots x_{i_n}(a_{i_n})$ as 
$$x_{i_1}(a_{i_1}) \dots x_{i_n}(a_{i_n})=x_{1}(a_1) \dots x_{n}(a_{n}) \prod_{j = n+1}^m x_j(b_j)$$
for some $a_1, \dots, a_n \in \F_r^\times$ and some $b_{n+1}, \dots, b_m \in \F_r$, 
where $m=n(n+1)/2$ is the number of positive roots in type $\rA_n$. To prove the claim, it is enough to show that for every 
$k \ge n+1$ and $a_1, \dots, a_n, a_k \in \F_r^\times$, $b_{k+1}, \dots, b_m \in \F_r$, there exists 
$u \in U(\rA_n(r))$ such that 
\begin{equation}\label{eq:conj}
\left(x_{1}(a_1) \dots x_{n}(a_{n}) x_k(a_k)\prod_{j = k+1}^mx_j(b_j)\right)^u=x_{1}(a_1) \dots x_{n}(a_{n})\prod_{j = k+1}^m x_j(b_j')
\end{equation}
for some $b_{k+1}', \dots, b_m' \in \F_r$. 

Let us fix $k \ge n+1$. Then we have that $\alpha_k=\alpha_i+\alpha_{i+1}+\dots + \alpha_{\ell}$ for suitable $1 \le i < \ell \le n$. 
Let us put $\alpha_s:=\alpha_k-\alpha_i=\alpha_{i+1}+\dots + \alpha_{\ell}$. If $1 \le j \le n$, notice that $\alpha_j + \alpha_s \notin \Phi^+$ if 
$j<i$, and $\alpha_j+\alpha_s \in \{\alpha_{k+1}, \dots, \alpha_m\}$ or $\alpha_j + \alpha_s \notin \Phi^+$ if $j>i$. 
Then 
the element $u=x_s(\epsilon a_k/a_i)$ for some $\epsilon \in \{\pm 1\}$ satisfies Equation \eqref{eq:conj}. The claim follows. \end{proof}

We now start the analysis for each $k=1, \dots, 14$. We use the following 
GAP code to determine the orbit of a tuple $v$ of  
positive roots under the action of $W$. 
We realize $W$ as a suitable permutation group, which acts on 
the root labels $1, \dots, 24$. 

WeylOrbList:=function(W, v)

return Filtered(Orbit(W, v, OnTuples), x $\to$ Maximum(x) $\le$ 12);

end;

\subsection{The cases $k=1, \dots, 5$} 

As the action of $W$ is transitive on $\Phi$, we have 
$[\alpha_{i}] \Wlt [\alpha_{j}]$ for every $i, j \in \{1, \dots, 12\}$. 
For $k=3$, 
we have that 
$$[\alpha_{1}, \alpha_{2}] \Wlt [\alpha_{4}, \alpha_{12}] \Wlt [\alpha_{3}, \alpha_{8}] \Wlt [\alpha_{5}, \alpha_{6}] \Wlt 
[\alpha_{7}, \alpha_{11}] \Wlt [\alpha_{9}, \alpha_{10}].$$
The cases $k=4$ and $k=5$ are obtained by applying the triality 
automorphism $\tau$ to the case $k=3$. 

\subsection{The case $k=6$}  We have that 
\begin{align*}
 [ \alpha_{1}, \alpha_{2}, \alpha_{4} ]&\Wlt [ \alpha_{1}, \alpha_{2}, \alpha_{12} ]\Wlt [ \alpha_{1}, \alpha_{4}, \alpha_{12} ]
 \Wlt [ \alpha_{2}, \alpha_{4}, \alpha_{12} ]\Wlt [ \alpha_{5},\alpha_{ 6},\alpha_{ 7} ]
 \Wlt  \\ 
 & \Wlt [ \alpha_{5},\alpha_{ 6}, \alpha_{11} ]\Wlt 
   [ \alpha_{5}, \alpha_{7}, \alpha_{11} ]\Wlt [ \alpha_{6}, \alpha_{7}, \alpha_{11} ]\Wlt [ \alpha_{8},\alpha_{ 9}, \alpha_{10} ]
  \Wlt  \\ 
  &\Wlt [ \alpha_{3}, \alpha_{8}, \alpha_{9} ] \Wlt[ \alpha_{3}, \alpha_{8}, \alpha_{10} ]\Wlt [ \alpha_{3}, \alpha_{9}, \alpha_{10} ].
\end{align*}
Moreover, we notice that 
$[\alpha_{3}, \alpha_{8}, \alpha_{9}, \alpha_{10}] \Wlt [\alpha_{1}, \alpha_{2}, \alpha_{4}, \alpha_{12}]$, and that 
$$\left(  x_1(a_1)x_2(a_2)x_4(a_4)x_{12}(a_{12}) \right)
^{u
}
=x_1(a_1)x_2(a_2)x_4(a_4)
$$
with $u=x_5(\omega)x_6(a_2\omega/a_1)x_7(a_4\omega/a_1)
x_{10}(a_2a_4\omega/a_1)$, 
where $\omega \in \F_q^\times$ is the unique square root of 
$a_1a_{12}/(a_2a_4)$. Hence $[\alpha_{3}, \alpha_{8}, \alpha_{9}, \alpha_{10}] \Wlt 
[\alpha_{1}, \alpha_{2}, \alpha_{4}, \alpha_{12}] \ult [\alpha_{1}, \alpha_{2}, \alpha_{4}]$.

\subsection{The case $k=7$}\label{sub:k7} We have that 
\begin{align*}
 [ \alpha_{3}, \alpha_{1} ]&\Wlt [ \alpha_{3}, \alpha_{2} ]\Wlt [ \alpha_{3}, \alpha_{4} ]\Wlt [ \alpha_{1}, \alpha_{6} ]
 \Wlt [ \alpha_{1}, \alpha_{7} ]
 \Wlt 
  [ \alpha_{1}, \alpha_{10} ]\Wlt \\ 
  &\Wlt [ \alpha_{2}, \alpha_{5} ]\Wlt [ \alpha_{2}, \alpha_{7} ]
  \Wlt [ \alpha_{2},\alpha_{ 9} ]\Wlt [ \alpha_{4}, \alpha_{5} ]\Wlt [ \alpha_{4}, \alpha_{6} ]\Wlt \\ 
  &\Wlt 
  [ \alpha_{4}, \alpha_{8} ]\Wlt 
  [ \alpha_{3}, \alpha_{11} ]\Wlt [ \alpha_{5}, \alpha_{10} ] \Wlt [ \alpha_{6}, \alpha_{9} ] \Wlt [ \alpha_{7}, \alpha_{8} ],
\end{align*}
and that 
\begin{align*}
 &[ \alpha_{1}, \alpha_{2}, \alpha_{6} ]\Wlt [ \alpha_{1},\alpha_{2}, \alpha_{10} ]\Wlt [ \alpha_{3}, \alpha_{8}, \alpha_{11} ]\Wlt [ \alpha_{5}, 
 \alpha_{6}, \alpha_{10 }], \\ 
 &[ \alpha_{1}, \alpha_{4}, \alpha_{7} ]\Wlt [ \alpha_{1}, \alpha_{4}, \alpha_{10} ]\Wlt [ \alpha_{3}, \alpha_{9}, \alpha_{11} ]
 \Wlt [ \alpha_{5}, \alpha_{7}, \alpha_{10} ], \\ 
  &[ \alpha_{2}, \alpha_{4}, \alpha_{7} ]\Wlt [ \alpha_{2}, \alpha_{4}, \alpha_{9} ]\Wlt [ \alpha_{3}, \alpha_{10}, \alpha_{11} ]\Wlt 
  [ \alpha_{6}, \alpha_{7}, \alpha_{9} ], \\ 
 &[ \alpha_{1}, \alpha_{2}, \alpha_{4}, \alpha_{10} ]\Wlt [ \alpha_{5}, \alpha_{6}, \alpha_{7}, \alpha_{10 }]\Wlt 
 [ \alpha_{3}, \alpha_{8}, \alpha_{9}, \alpha_{11} ].
\end{align*}
We are now going to show the following, 
\begin{align*}
&a) [ \alpha_{1}, \alpha_{2}, \alpha_{6} ] \leadsto [\alpha_{3}, \alpha_{1}], \qquad 
b) [ \alpha_{1}, \alpha_{4}, \alpha_{7} ] \leadsto [\alpha_{3}, \alpha_{4}], \qquad 
c) [ \alpha_{2}, \alpha_{4}, \alpha_{7} ] \leadsto [\alpha_{3}, \alpha_{2}], \\
&d) [ \alpha_{1}, \alpha_{2}, \alpha_{4}, \alpha_{10} ]  \leadsto [\alpha_{1}, \alpha_{2}, \alpha_{6}],\qquad 
e) [ \alpha_{3}, \alpha_{8}, \alpha_{10}, \alpha_{11} ] \leadsto [\alpha_{5}, \alpha_{6}, \alpha_{10}], \\
&f) [ \alpha_{3}, \alpha_{9}, \alpha_{10}, \alpha_{11} ] \leadsto [\alpha_{5}, \alpha_{7}, \alpha_{10}], 
\end{align*}
and that every  $x_3(a_8a_9a_{10}/(\eta a_{11}^2))x_{8}(a_{8})x_{9}(a_{9})x_{10}(a_{10})x_{11}(a_{11})$, with 
$\eta \in \F_q^\times$ of trace $0$, is conjugated to some $x_{5}(\ba_{5})x_{6}(\ba_{6})x_{7}(\ba_{7})x_{10}(\ba_{10})$ with $\ba_{5}, \ba_{6}, \ba_{7}, \ba_{10} \in \F_q^\times$. 

For $a)$ we have that 
$$[\alpha_{1}, \alpha_{2}, \alpha_{6}]=[\alpha_{1}, \alpha_{6}, \alpha_{2}] \Wlt 
[\alpha_{3}, \alpha_{1}, \alpha_{8}] \leadsto_{x_6(a_8/a_1)} [\alpha_{3}, \alpha_{1}],$$
and similarly for $b)$ and $c)$. For $d)$, we have 
$$
[\alpha_{1}, \alpha_{2}, \alpha_{4}, \alpha_{10}] \Wlt [\alpha_{1}, \alpha_{2}, \alpha_{5}, \alpha_{12}] 
\leadsto_{x_9(a_1a_{12}/(a_2a_5))x_{10}(a_{12}/a_5)}
[\alpha_{1}, \alpha_{2}, \alpha_{5}] \Wlt [\alpha_{1}, \alpha_{2}, \alpha_{6}].
$$
For $e)$, we have that 
$$[\alpha_{3}, \alpha_{8}, \alpha_{10}, \alpha_{11}] \Wlt [\alpha_{5}, \alpha_{6}, \alpha_{10}, \alpha_{11}] 
\leadsto_{x_1(a_{11}/a_{10})x_2(a_6a_{11}/(a_5a_{10}))} 
[ \alpha_{5}, \alpha_{6}, \alpha_{10}].$$
A similar computation yields $f)$. 

Finally, we notice that the tuple $(\alpha_{3}, \alpha_{8}, \alpha_{9}, \alpha_{10}, \alpha_{11})$ 
is mapped to $(\alpha_{5}, \alpha_{6}, \alpha_{7}, \alpha_{11}, \alpha_{10})$ by a suitable element of $W$, hence if $a_8, a_9, a_{10}, a_{11} \in \F_q^\times$ then there exists 
$w \in W$ such that 
\begin{equation}\label{eq:4rhs}
(x_3(a_8a_9a_{10}/(\eta a_{11}^2))x_{8}(a_{8})\cdots x_{11}(a_{11}))^{\dot{w}}=x_5(\ba_5)\cdots x_{10}(\ba_{10})x_{11}(\ba_5\ba_{10}^2\eta/(\ba_6\ba_7)),
\end{equation}
where $\ba_6=a_8$, $\ba_7=a_9$, $\ba_{10}=a_{11}$, 
$\ba_5=a_8a_9a_{10}/(\eta a_{11}^2)$. 
%$\ba_{11}=a_{10}$. 
If $t \in \F_q$, then we have that 
$$\left(x_5(\ba_5)x_6(\ba_6)x_7(\ba_7)x_{10}(\ba_{10})^{x}\right)=
x_5(\ba_5)x_6(\ba_6)x_7(\ba_7)x_{10}(\ba_{10})x_{11}(\ba_5\ba_6\ba_7t^2+\ba_5\ba_{10}t),$$
where $x=x_1(t)x_2(t\ba_6/\ba_5)x_4(t\ba_7/\ba_5)x_8(\ba_5\ba_6t)$. If we define $a:=\ba_5\ba_6\ba_7$, $b:=\ba_5\ba_{10}$ and $c:=\ba_5\ba_{10}^2\eta/(\ba_6\ba_7)$, then 
$\Tr(ac/(b^2))=\Tr(\eta)=0$, hence there exists $t \in \F_q$ such that $at^2+bt+c=0$. This means that 
the right hand side of Equation \eqref{eq:4rhs} is $U$-conjugate to $x_5(\ba_5)x_6(\ba_6)x_7(\ba_7)
x_{10}(\ba_{10})$. 

\subsection{The case $k=8$} Notice that $(\alpha_{5}, \alpha_{6}, \alpha_{7}, \alpha_{10}, \alpha_{11})$ 
is mapped to $(\alpha_{1}, \alpha_{2}, \alpha_{4}, \alpha_{10}, \alpha_{12})$ by $W$. Since 
$p=2$, we have that there exists some $w \in W$ such that 
$$(x_5(a_5)x_{6}(a_{6})x_7(a_7)x_{10}(a_{10})x_{11}(a_{11}))^{\dot{w}}=x_1(a_5)x_{2}(a_{6})x_4(a_7)x_{10}(a_{10})x_{12}(a_{11})$$
for every $a_5, a_6, a_7, a_{10}, a_{11} \in 
\F_q^{\times}$. Then the same argument and the 
same $w \in W$ as at the end of \S\ref{sub:k7} 
yield
$$(x_3(a_8a_9a_{10}/(\mu a_{11}^2))x_{8}(a_{8})\cdots x_{11}(a_{11}))^w=x_5(\ba_5)x_6(\ba_6)x_7(\ba_7)
x_{10}(\ba_{10})x_{11}(\ba_5\ba_{10}^2\mu/(\ba_6\ba_7))$$
for some $\ba_5, \ba_6, \ba_7$ and $\ba_{10}$ in $\F_q^\times$. 

\subsection{The cases $k=9, 10, 11$} We have that 
$$ [ \alpha_{3}, \alpha_{1}, \alpha_{2} ] \Wlt [ \alpha_{4}, \alpha_{5}, \alpha_{6} ]\Wlt [ \alpha_{4}, \alpha_{3}, \alpha_{8} ]
\Wlt [ \alpha_{7}, \alpha_{1}, \alpha_{2} ],$$
and the above triples all correspond to the simple roots of some root system of type $\rA_3$. 
By Proposition \ref{prop:typeA}, we have that $[ \alpha_{4}, \alpha_{3}, \alpha_{8} ] \leadsto 
[ \alpha_{3}, \alpha_{4}, \alpha_{8} ]$ and $[ \alpha_{7}, \alpha_{1}, \alpha_{2} ] \leadsto [\alpha_{1}, \alpha_{2}, \alpha_{7}]$. 
Finally, we notice that 
$$[\alpha_{1}, \alpha_{2}, \alpha_{4}, \alpha_{7}] \Wlt 
[\alpha_{1}, \alpha_{2}, \alpha_{7}, \alpha_{12}] \leadsto_{x_8(a_{12}/a_7)} 
[\alpha_{1}, \alpha_{2}, \alpha_{7}].$$
The cases $k=10$ and $k=11$ are again obtained by applying powers of $\tau$ 
to the case $k=9$, and by noticing that 
$$x_1(a_1)x_2(a_2)x_4(a_4)x_6(a_2a)x_7(a_4a)^u
=x_1(a_1)x_2(a_2)x_4(a_4)x_5(a_1a),$$
where $u=x_3(a)x_4(a_4)x_6(a_2a)x_{10}(a_2a_4a)$. 

\subsection{The case $k=12$}
Let $x:=x_1(a_1)x_2(a_2)x_{4}(a_{4})x_{6}(a_6^*)x_{7}(a_7^*)$, 
with $a_6^*, a_7^* \in \F_q^\times$ 
such that $(a_6^*, a_7^*)\ne a(a_2, a_4)$ for any $a \in \F_q^\times$. 
%We have that 
Then $[\alpha_{1}, \alpha_{2},\alpha_{ 4},\alpha_{ 6},\alpha_{ 7}] \Wlt 
[\alpha_{5},\alpha_{ 6},\alpha_{ 7}, \alpha_{2}, \alpha_{4}]$. Rewriting the product 
$x_5(a_1)x_6(a_2)x_{7}(a_{4})x_{2}(a_6^*)x_{4}(a_7^*) $, 
we see that it is equal to 
$$y:=x_2(a_6^*)x_4(a_7^*)x_5(a_1)x_6(a_2)x_7(a_4)
x_8(a_1a_6^*)x_9(a_1a_7^*)x_{10}(a^*)
x_{11}(a_1a_6^*a_7^*)x_{12}(a_1a^*),$$
with $a^*:=a_2a_7^*+a_4a_6^*$. Notice that $a^* \ne 0$ by assumption. 

We observe that 
$$x:=y^{x_3(a_2/a_6^*)}=x_2(a_6^*)x_4(a_7^*)
x_5(a_1)x_7(a^*/a_6^*)
x_8(b_8)x_9(b_9)x_{10}(b_{10})
x_{11}(b_{11})x_{12}(b_{12})$$
for some $b_8, \dots, b_{12} \in \F_q$. 
Now we notice that 
$$X_2X_4X_5X_7X_8X_9X_{10}X_{11}X_{12} 
\cong U(\rA_4(q))/Z(U(\rA_4(q)))$$
by extension to an isomorphism of the following map, 
$$x_4(t) \mapsto x_1(t), \qquad 
x_5(t) \mapsto x_2(t), \qquad 
x_2(t) \mapsto x_3(t), \qquad 
x_7(t) \mapsto x_4(t).$$
By applying exactly the same argument as in 
Proposition \ref{prop:typeA}, we see that $x$ is conjugate in $U$ to an element of the form  $x_2(\ba_2)x_4(\ba_4)
x_5(\ba_5)x_7(\ba_7)$ with 
$\ba_2, \ba_4, \ba_5, \ba_7 \in \F_q^\times$. But now we have that 
$$[\alpha_{2}, \alpha_{4}, \alpha_{5}, \alpha_{7}] \Wlt  [ \alpha_{2}, \alpha_{4}, \alpha_{3}, \alpha_{9} ] \Wlt 
[ \alpha_{6}, \alpha_{7}, \alpha_{1}, \alpha_{4} ] \Wlt [ \alpha_{3}, \alpha_{10}, \alpha_{1}, \alpha_{4} ].$$
By Proposition \ref{prop:typeA}, we have 
\begin{align*}
&[ \alpha_{2}, \alpha_{4}, \alpha_{3}, \alpha_{9} ] \leadsto [ \alpha_{3}, \alpha_{2}, \alpha_{4}, \alpha_{9} ], \qquad
[ \alpha_{6}, \alpha_{7}, \alpha_{1}, \alpha_{4} ] \leadsto [ \alpha_{1}, \alpha_{4}, \alpha_{6}, \alpha_{7} ],\\  
&[ \alpha_{3}, \alpha_{10}, \alpha_{1}, \alpha_{4} ] \leadsto [ \alpha_{3}, \alpha_{1}, \alpha_{4}, \alpha_{10} ],
\end{align*}
hence each element of row $4$ corresponding to 
$u_{12}^G$ in 
Table \ref{tab:fusiond4} is conjugate to suitable elements 
in rows $2$, $3$, $6$ and $7$. 

By conjugating $y$ by $x_3(a_4/a_7^*)$, and by noticing that 
$$X_2X_4X_5X_6X_8X_9X_{10}X_{11}X_{12} 
\cong U(\rA_4(q))/Z(U(\rA_4(q))),$$
we deduce as above that $x$ is conjugate to an element of the form  $x_2(\ba_2)x_4(\ba_4)
x_5(\ba_5)x_6(\ba_6)$ for some $\ba_2, \ba_4, \ba_5, \ba_6 \in \F_q^\times$. 
We apply the action of $W$, 
and again by Proposition \ref{prop:typeA} we get that 
\begin{align*}
&[\alpha_{2}, \alpha_{4}, \alpha_{5}, \alpha_{6}] \Wlt [\alpha_{6}, \alpha_{7}, \alpha_{1}, \alpha_{2}] 
\leadsto [\alpha_{1}, \alpha_{2}, \alpha_{6}, \alpha_{7}],\\
&[\alpha_{2}, \alpha_{4}, \alpha_{5}, \alpha_{6}] \Wlt [\alpha_{10}, \alpha_{3}, \alpha_{1}, \alpha_{2}] \leadsto 
[\alpha_{3}, \alpha_{1}, \alpha_{2}, \alpha_{10}],
\end{align*}
which means that $x$ is also conjugate 
to suitable elements in rows $1$ and $5$ of Table \ref{tab:fusiond4}.

\subsection{The cases $k=13, 14$} This follows from 
Table \ref{tab:reptorus}.

\section{Unitriangular shape of the $\ell$-decomposition matrices of $\SO_8^+(2^f)$} \label{sec:unitr}

We now focus on the determination of the $\ell$-decomposition matrices 
of $G$ where $\ell$ is an odd prime. 
By \cite{GH91} and \cite{Gec93}, we have that the unipotent characters form a 
basic set for the union of unipotent $\ell$-blocks. 
From now on, we just focus on the 
unipotent part of the decomposition matrix, which by slight 
abuse of terminology we refer to as the 
decomposition matrix itself of $G$. 

We now construct $\ell$-projective characters $\psi_1, \dots, \psi_{14}$ and $\psi_6', \psi_7'$, such that the 
matrix 
$$M:=\big( \langle \chi_i, \psi_j \rangle \big)_{i, j=1}^{14} $$
has a unitriangular 
shape, and $\psi_6'$ and $\psi_7'$ are useful to get upper bounds for some decomposition numbers of $G$. 
In particular, the $\ell$-decomposition matrices of $\SO_8^+(2^f)$ are also unitriangular. 
We explain in this section the construction and relevance of such projective characters, 
which are all obtained by inducing characters in $\Irr(U)$ to $G$. 

We first expand 
Equation \eqref{eq:isa} with 
$U$ in place of $H$ into a suitable form for our subsequent computations. 
We denote by $\fC$ the set of labels of the 
form $\cC$ for each family of conjugacy 
class representatives in the first column 
of Table \ref{tab:fusiond4}. For every $k \in \{1, \dots, 14\}$, we define 
$$\fC_k:=\{\cC \in \fC \mid \cC \subseteq 
u_k^G \cap U\},$$
and we define the \emph{family sum} 
$$\cS(\varphi, \cC):=\sum_{u \in \cC}\varphi(u)$$
with respect to $\varphi \in \Irr(U)$ and 
to the family $\cC$ of conjugacy class representatives of $U$. 

The following proposition shows that if we know all family sums 
for $\varphi$, then we know the multiplicities in $\varphi^G$ of each unipotent character of $G$. Notice 
that the value $d_{k, \cC}$ in the proposition is well-defined. 

\begin{prop}\label{prop:phiG}
Let $\chi$ be a unipotent character of $G$, and 
let $\varphi \in \Irr(U)$ and $\psi=\varphi^G$. For 
$k=1, \dots, 14$ and $\cC \in \fC_k$, let $\eta_k:=\overline{\chi(u_k)}$ and $d_{k, \cC}:=|C_U(u)|$ for $u \in \cC$. 
Then we have
$$\langle \psi, \chi \rangle =
  \sum_{k=1}^{14}\eta_k \left(\sum_{\cC \in \fC_k}
d_{k, \cC}^{-1}\cS(\varphi, \cC)\right).$$
\end{prop}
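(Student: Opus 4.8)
The plan is to unwind the character-induction formula \eqref{eq:isa} by organizing the sum over $U$-conjugacy classes according to the fusion pattern recorded in Table \ref{tab:fusiond4}. First I would write, for $\psi = \varphi^G$ and $\chi$ a unipotent character of $G$,
$$\langle \psi, \chi \rangle = \frac{1}{|G|}\sum_{g \in G} \psi(g)\overline{\chi(g)}.$$
Since $\psi = \varphi^G$ is induced from $U \le G$ and both $\varphi$ and $\chi$ vanish off the set of unipotent elements (as $\varphi \in \Irr(U)$ with $U$ a $2$-group, and $\chi$ restricted to $2$-elements is what we care about), the only $g$ contributing are those in some $u_k^G$ with $k \in \{1,\dots,14\}$; recall that $\{u_1,\dots,u_{14}\}$ is a full set of representatives of the $G$-classes meeting $U$. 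Grouping the sum over $g$ by these classes gives $\langle \psi, \chi\rangle = \sum_{k=1}^{14} \overline{\chi(u_k)}\,\big(|u_k^G|^{-1}\sum_{g \in u_k^G}\psi(g)\big)$, using that $\chi$ is constant on $u_k^G$; the inner average of $\psi$ over the class is then $\psi(u_k)$ if one prefers, but it is cleaner to keep it as a sum and substitute \eqref{eq:isa}.

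Next I would substitute \eqref{eq:isa} (with $H = U$) for $\psi^G(g) = \varphi^G(g)$ where $g$ ranges over $u_k^G$. This expresses $\varphi^G(g)$ in terms of $|C_G(g)|$ and the $U$-class data inside $u_k^G \cap U$, namely $\varphi^G(g) = |C_G(g)|\sum_{i=1}^{\ell(k)}\varphi(h_k^i)/|C_U(h_k^i)|$, where $h_k^1,\dots,h_k^{\ell(k)}$ represent the $U$-classes fusing into $u_k^G$. Summing over $g \in u_k^G$ multiplies by $|u_k^G| = |G|/|C_G(u_k)|$, and since $|C_G(g)| = |C_G(u_k)|$ is constant on the class, the factors $|C_G(g)|$ and $|u_k^G|^{-1}$ combine to cancel, leaving exactly $\sum_{i=1}^{\ell(k)}\varphi(h_k^i)/|C_U(h_k^i)|$. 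Thus
$$\langle \psi, \chi\rangle = \sum_{k=1}^{14}\overline{\chi(u_k)}\sum_{i=1}^{\ell(k)}\frac{\varphi(h_k^i)}{|C_U(h_k^i)|}.$$
It remains to match this with the statement. The $U$-class representatives $h_k^i$ fusing into $u_k^G$ are, by the analysis of Section \ref{sec:fusio} summarized in Table \ref{tab:fusiond4}, exactly the union over the families $\cC \in \fC_k$ of the conjugacy classes in $\cC$; moreover within a single family $\cC$ the centralizer order $|C_U(u)|$ is constant, equal to $d_{k,\cC}$ (this well-definedness is exactly the remark preceding the proposition and is what makes the double sum collapse correctly). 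So $\sum_i \varphi(h_k^i)/|C_U(h_k^i)| = \sum_{\cC \in \fC_k} d_{k,\cC}^{-1}\sum_{u \in \cC}\varphi(u) = \sum_{\cC \in \fC_k}d_{k,\cC}^{-1}\cS(\varphi,\cC)$, and writing $\eta_k = \overline{\chi(u_k)}$ yields the claimed identity.

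The main obstacle — really the only nontrivial point — is justifying that the $U$-classes meeting $u_k^G$ are precisely organized by the families $\cC \in \fC_k$ with constant $U$-centralizer order on each family, i.e. that $d_{k,\cC}$ is well-defined; this is not a formal manipulation but relies on the explicit fusion computation of Section \ref{sec:fusio}, which I would cite. Everything else is the standard unwinding of induced-character inner products via \eqref{eq:isa}, together with the cancellation of the $|C_G(g)|$ and class-size factors; I would also note explicitly that $\varphi$ and $\chi$ agree on $2$-regular parts trivially since $U$ is a $2$-group so only unipotent (i.e. $2$-element) classes contribute, which is why the sum over $k$ runs only over $1,\dots,14$.
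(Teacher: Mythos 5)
Your argument is correct and follows essentially the same route as the paper: restrict the inner product to the $G$-conjugates of elements of $U$ (where the induced character $\psi=\varphi^G$ is supported), group by the $14$ unipotent classes, substitute Equation \eqref{eq:isa} so that the class sizes cancel against the centralizer orders, and then regroup the $U$-classes into the families $\cC$ using that $d_{k,\cC}$ is constant on each family. One aside is off --- a unipotent character $\chi$ of $G$ does \emph{not} vanish off the unipotent elements --- but this is harmless, since the correct justification (that $\psi$, being induced from the $2$-group $U$, is supported on the unipotent set) is also the one you actually use.
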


\begin{proof} We have that 
\begin{align*}
\langle \psi, \chi \rangle 
&=\frac{1}{|G|}\sum_{g \in G}\psi(g)\overline{\chi(g)} 
=\frac{1}{|G|}\sum_{u \in U^G}\psi(u)\overline{\chi(u)} 
=\sum_{k=1}^{14}\frac{\eta_k}{|C_G(u_k)|}\psi(u) \\
&=\sum_{k=1}^{14}\eta_k \left( \sum_{i=1}^{\ell(k)}\frac{\varphi(u_k^i)}{|C_U(u_k^i)|} \right)
=\sum_{k=1}^{14}\eta_k \left(\sum_{\cC \in \fC_k}
\frac{1}{d_{k, \cC}}\left(\sum_{u \in \cC}\varphi(u)\right) \right)\\
&
=\sum_{k=1}^{14}\eta_k \left(\sum_{\cC \in \fC_k}
d_{k, \cC}^{-1}\cS(\varphi, \cC)\right). \qedhere \end{align*}
\end{proof}

The projective characters $\psi_1, \psi_2, \dots, \psi_{14}$ and $\psi_6'$, $\psi_7'$ 
of $\SO_8^+(2^f)$ 
are constructed as follows. They are characters of the form $\varphi^G$ for suitable 
$\varphi \in \Irr(U)$ as described later. By Proposition 
\ref{prop:phiG}, the character values 
of $\varphi^G$ are obtained once we know both the 
fusion of the conjugacy classes of $U$ into $G$ determined in Section \ref{sec:fusio} and 
the character values of $\varphi$; 
the latter are provided in details in \cite{LM15} and 
\cite{GLM17} in the cases of our interest. In particular, each character $\varphi$ 
is inflated, then induced from a linear character of a certain abelian subquotient 
of $U$. 

The character labels for each $\varphi \in \Irr(U)$ of our interest are of the form 
$\chi_{\underline{i}}^{\underline{c}}$, in the notation of 
\cite[Table 4]{GLM17}, with $\underline{i}$ 
a multiset parametrizing the indices of root subgroups of 
the abelian subquotient of $U$ previously determined, and $\underline{c}$ 
a tuple in $\F_q$ that determines the values of the linear character that we inflate and induce from such a subquotient. 
We use the notation 
$\psi_{\underline{i}}^{\underline{c}}$ for $(\chi_{\underline{i}}^{\underline{c}})^G$ in the sequel.

We obtain the projective characters $\psi_6'$, $\psi_7'$ and $\psi_i$ with $i \in \{1, \dots, 5\} \cup \{10, \dots, 14\}$ 
by looking at 
a certain family 
$\cC \in \fC_k$, say $\cC:=\{x_{\alpha_{i_1}}(a_{i_1}) \cdots x_{\alpha_{i_m}}(a_{i_m}) \mid a_{i_1}, \dots, a_{i_m} \in \F_q^\times\}$, 
and by selecting a character in $\Irr(U)$ with label $\psi_{\underline{i}}^{\underline{c}}$, where $\underline{i}=(i_1, \dots, i_m)$ is a 
tuple indexed exactly by the indices in $\cC$, 
and $\underline{c}$ is a tuple which may depend on 
$a_{i_1}, \dots, a_{i_m}$ that we explicitly provide in 
each case. In most cases, we can select $\underline{c}$ 
to be %the $m$-tuple 
$(1, \dots, 1)$. We point out that one can easily determine the columns of the $\ell$-decomposition matrices 
in Table \ref{tab:decn} corresponding to such $\ell$-projective characters, except for the character 
$\psi_{13}$, also by Harish-Chandra induction of projective covers of the Steinberg characters of Levi subgroups of $G$, as in \cite[Section 5]{GP92}. We include 
the computation for completeness, and to show 
that the results obtained by using the two methods agree. 

The construction of $\psi_{13}$ does deserve special attention. Namely this character is a natural replacement for the 
character $\Phi_{13}$ in \cite[Section 5]{GP92}, whose construction as a generalized Gelfand-Graev character 
of $\SO_8^+(p^f)$ for $p \ge 3$ cannot be extended to the case 
$p=2$, as $2$ is a bad prime in type $\rD_4$. 

The $\ell$-projective characters $\psi_6, \dots, \psi_9$ play a fundamental role to prove the unitriangularity of the 
$\ell$-decomposition matrices of $G$. These are constructed by inducing 
characters in $\Irr(U)$ whose degree is not a power of $q$, namely $q^3/2$; we recall 
on the other hand that if $r=p^f$ with $p \ge 3$, then the degree of an irreducible character of 
a Sylow $p$-subgroup of $\rD_4(r)$ is always a power of $r$. 
The importance of the construction of $\psi_6, \dots, \psi_9$ lies in the fact 
that such characters are obtained with substantially different 
methods from the case of $\SO_8^+(p^f)$ when $p$ is odd. 

We use the formulas in \cite[Table 4]{GLM17} in order to compute all family sums for our choices of characters. 
In the rest of this section we 
explain how to obtain such information. 
We use the notation $\ux(\ut)$ as in Section \ref{sec:preli}. 
Family sums involve many of the 
so-called character Gauss sums; the  
following computation is useful to deal with most cases. 

\begin{lem}\label{lem:Gauss} Let $k_1, \dots, k_m \in \F_q^{\times}$. Then 
$$\sum_{a_1, \dots, a_m \in \F_q^\times}\phi(k_1a_1+\dots+k_ma_m)=(-1)^m.$$
\end{lem}

\begin{proof} We recall that $\sum_{t \in \F_q}\phi(t)=0$, hence $\sum_{a \in \F_q^{\times}}\phi(a)=-1$. The 
claim then follows by induction on $m$. 
\end{proof}

\subsection{The characters $\psi_1, \dots, \psi_5, \psi_{10}, \dots, \psi_{12}, \psi_{14}$ and $\psi_6', \psi_7'$}\label{sub:114} We label the 
linear characters of $U$ as in \cite[\S4.1]{GLM17}, for $b_1, \dots, b_4 \in \F_q$. The character values are
$$\chi_{\text{lin}}^{b_1, b_2, b_3, b_4}(\ux(\ut))=\phi(b_1t_1+b_2t_2+b_3t_3+b_4t_4).$$
We choose $\ub=(b_1, b_2, b_3, b_4) \in \F_q^4$ for $\psi_1, \dots, \psi_5, \psi_{10}, \dots, \psi_{12}$ and $\psi_{14}$ 
as in Table \ref{tab:val14}, and 
$b=(1, 1, 0, 1)$ (respectively $b=(1, 0, 1, 0)$) for 
$\psi_6'$ (respectively $\psi_7'$). An application of Lemma \ref{lem:Gauss} easily gives the values of these characters. %respective values in Table \ref{tab:value}. 
Notice that 
$\psi_4$ and $\psi_5$ (respectively $\psi_{11}$ and $\psi_{12}$) can be obtained by applying the triality automorphism 
to $\psi_3$ (respectively $\psi_{10}$).  

\begin{center}
\begin{table}[h]
\begin{scriptsize}
\begin{tabular}{|c|c|c|c|c|c|c|c|c|c|}
\hline
	 & $\psi_1$ & $\psi_2$ & $\psi_3$ & $\psi_4$ & $\psi_5$ & $\psi_{10}$ & $\psi_{11}$ & $\psi_{12}$ & $\psi_{14}$ \\
	 \hline
 $\ub$ & $(0, 0, 0, 0)$  & $(0, 0, 1, 0)$  & $(1, 1, 0, 0)$  & $(0, 1, 0, 1)$  & $(1, 0, 0, 1)$  & $(1, 1, 1, 0)$  & $(0, 1, 1, 1)$  & $(1, 0, 1, 1)$  & $(1, 1, 1, 1)$ \\
\hline
 \end{tabular}
\caption{The values of $\ub$ corresponding to the $\ell$-projective characters in \S\ref{sub:114}.}
\label{tab:val14}
\end{scriptsize}
\end{table}
\end{center}
\vspace{-12mm}

\subsection{The characters $\psi_6, \psi_7, \psi_8, \psi_9$} \label{sub:6789}
The 
irreducible characters of 
degree $q^3/2$ in $\Irr(U)$ are very important for our computations. 
Namely by inducing such 
characters, we obtain an identity block of size $4$ %$1_4$ 
in Table \ref{tab:decn} whose rows are labelled by $\chi_6, \chi_7, \chi_8, \chi_9$, and whose columns are labelled by the $\ell$-projective characters $\psi_6, \psi_7, \psi_8, \psi_9$ that we now construct. 

The characters $\chi_{8, 9, 10, q^3/2}^{s, t, 1, 1, 1, 1}$, where $s, t \in \mu\F_2$ and $\mu$ is the previously fixed element of $\F_q$ of trace $1$, 
are defined as in \cite[Theorem 2.3]{LM15}. 
The value $\chi_{8, 9, 10, q^3/2}^{s, t, 1, 1, 1, 1}(\underline{x}(\underline{t}))$ is 
$$\begin{cases}
\frac{q^3}{2}\phi(st_1+t_7+\sum_{i=8}^{10}t_i), & \text{ if }\underline{x}(\underline{t}) \in Z, \\
\delta_{t_1, t_4}\delta_{t_2, t_4}\delta_{t_3, 1}(\delta_{t_1, 0}+\delta_{t_1, 1})\frac{q^2}{2}
\phi(st_1+tt_3+t_7+(t_5+t_7)(t_6+t_7)+\sum_{i=8}^{10}t_i), & \text{ otherwise, }\\
\end{cases}
$$
where 
$$Z=\{1, x_1(1)x_2(1)x_4(1)\} \cdot \{x_5(b)x_6(b)x_7(b) \mid b \in \F_q\} \cdot X_8X_9X_{10}X_{11}X_{12}.$$
The character labels corresponding to each $s$ and $t$ 
are as in Table \ref{tab:psi69}. 

\begin{center}
\begin{table}[h]
\begin{tabular}{|c|c|c|c|c|}
 \hline
	 & $\psi_6$ & $\psi_7$ & $\psi_8$ & $\psi_9$\\
	 \hline
 $\chi$ & $\chi_{8, 9, 10, q^3/2}^{\mu, \mu, 1, 1, 1, 1}$  & $\chi_{8, 9, 10, q^3/2}^{0, 0, 1, 1, 1, 1}$  & $\chi_{8, 9, 10, q^3/2}^{\mu, 0, 1, 1, 1, 1}$  & $\chi_{8, 9, 10, q^3/2}^{0, \mu, 1, 1, 1, 1}$ \\
\hline
 \end{tabular}
\caption{The character labels as in \S\ref{sub:6789} corresponding to $\psi_6, \psi_7, \psi_8$ and $\psi_9$.}
\label{tab:psi69}
\end{table}
\end{center}
\vspace{-10mm}

We compute the family sum $\cS(\chi_{8, 9, 10, q^3/2}^{s, t, 1, 1, 1, 1}, \cC_{3, q^8, 1}^{7})$. We get 
\begin{align*}
\cS(\chi_{8, 9, 10, q^3/2}^{s, t, 1, 1, 1, 1}, \cC_{3, q^8, 1}^{7})
&=\frac{q^2}{2} \sum_{\nu \in \F_q^\times \mid \Tr(\nu)=0} \sum_{a_8, a_9, a_{11} \in \F_q^\times}\phi(t+a_8+a_9+a_8^{-1}a_9^{-1}a_{11}^2\nu)\\
&=\frac{q^2}{2} \sum_{\nu \in \F_q^\times \mid \Tr(\nu)=0} \sum_{a_8, a_9, \omega_{11} \in \F_q^\times}\phi(t+a_8+a_9+a_8^{-1}a_9^{-1}\omega_{11}\nu)\\
&=-\frac{q^2}{2} \sum_{\nu \in \F_q^\times \mid \Tr(\nu)=0} \sum_{a_8, a_9 \in \F_q^\times}\phi(t+a_8+a_9)\\
&=-\frac{\phi(t)q^2(q-2)}{4},
\end{align*}
where we first used the fact that $\{a_{11}^2 \mid a_{11} \in \F_q^\times\}=\{\omega_{11} \mid \omega_{11} \in \F_q^\times\}$, and 
we then applied Lemma \ref{lem:Gauss}. The other family sums are obtained via standard applications of Lemma \ref{lem:Gauss}; notice that $\phi(\mu)=-1$, 
as $\Tr(\mu)=1$. 

\subsection{The character $\psi_{13}$} Let $d \ne 0, 1$, with $d \in \F_q^{\times}$; this certainly exists under our previous assumption 
$q>2$. We 
obtain the character $\psi_{13}$ by inducing to $G$ the character $\chi_{5, 6, 7}^{1, 1, 1, 0, 1, d} \in \Irr(U)$ whose 
values, as in \cite[Proposition 5.1]{GLM17}, are as follows, 
$$\chi_{5,6,7}^{1, 1, 1, 0, 1, d}(\underline{x}(\underline{t}))=q\delta_{(t_3, t_1+t_2+t_4)}\phi(t_2+d t_4+t_5+t_6+t_7).$$
Let $\chi:=\chi_{5,6,7}^{1, 1, 1, 0, 1, d}$ and $\cS:=\cS(\chi, \cC_{1, 2, 4, q^6}^{12, p=2})$. 
We have 
\begin{align*}
\cS & 
=q\sum_{a_1\in \F_q^\times}\sum_{a_2 \in \F_q^\times \setminus \{a_1\}} \sum_{a_6 \in \F_q^\times} 
\sum_{a_7 \in \F_q^\times \setminus \{a_6a_4/a_2\}}\phi(a_2+d(a_1+a_2)+a_6+a_7)\\
&=-q\sum_{a_1\in \F_q^\times}\sum_{a_2 \in \F_q^\times \setminus \{a_1\}} \sum_{a_6 \in \F_q^\times} 
\phi(a_2+d (a_1+a_2)+a_6)+\phi(a_2+d (a_1+a_2)+a_6(a_1/a_2))\\
&=2q\sum_{a_1\in \F_q^\times}\sum_{a_2 \in \F_q^\times \setminus \{a_1\}} \phi(d a_1 + (d+1)a_2))
=-2q\sum_{a_1\in \F_q^\times}\phi(d a_1)-2q \sum_{a_1\in \F_q^\times} \phi(a_1)\\
&=4q,
\end{align*}
where we used that $d \ne 0$ and $d + 1 \ne 0$. The values of 
$\cS(\chi, \cC_{1, 2, 4, q^6}^{i, p=2})$ for $i \in \{6, \dots, 11\}$ 
%$$\cS(\chi, \cC_{1, 2, 4, q^6}^{6, p=2}), \,\, \cS(\chi, \cC_{1, 2, 4, q^6}^{7, p=2}), \,\, \cS(\chi, \cC_{1, 2, 4, q^6}^{8, p=2}), \,\, \cS(\chi, \cC_{1, 2, 4, q^6}^{9, p=2}),  \,\,\cS(\chi, \cC_{1, 2, 4, q^6}^{10, p=2}), \,\, \cS(\chi, \cC_{1, 2, 4, q^6}^{11, p=2})$$
are obtained in a similar way. It is straightforward to obtain the values of all other family sums 
by using Lemma \ref{lem:Gauss}. 
%\vspace{1mm}

By applying Proposition \ref{prop:phiG}, we obtain with the use of CHEVIE 
the matrix of the inner products 
of the unipotent characters $\chi_i$ %for $i=1, \dots, 14$ 
with each $\psi_j$ or $\psi_6'$, 
$\psi_7'$ in Table \ref{tab:decn}. This gives an approximation of 
the $\ell$-decomposition matrices of $\SO_8^+(q)$. 
Notice that $\langle \chi_i, \psi_i\rangle = 1$, and $\langle \chi_i, 
\psi_j \rangle=0$ when $1\le i<j \le 14$. Hence we obtain the following result, which we state again. 

\begin{theo}\label{theo:uni}
Let $\ell \ne 2$ be a prime number. Then the restrictions of the  $\ell$-decomposition matrices 
of $\SO_8^+(2^f)$ to the set of unipotent characters have unitriangular shape. 
\end{theo}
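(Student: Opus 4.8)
The plan is to exhibit an explicit lower-triangular ``decomposition-like'' matrix $M$ whose diagonal entries are all $1$, and which moreover is realized as the matrix of multiplicities $\langle \chi_i,\psi_j\rangle$ of the unipotent characters $\chi_1,\dots,\chi_{14}$ in a family of genuine $\ell$-projective characters $\psi_1,\dots,\psi_{14}$ of $G$. Since $\{\chi_1,\dots,\chi_{14}\}$ is a basic set for the union of the unipotent $\ell$-blocks by \cite{GH91, Gec93}, any projective character decomposes over this basic set with non-negative integer coefficients, and the decomposition matrix $D$ satisfies $M=PD$ for some non-negative integer matrix $P$ (the multiplicities of the projective indecomposables in the $\psi_j$). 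A standard argument then shows that if $M$ is lower-triangular with $1$'s on the diagonal and $D$ has the right size, $D$ itself must be lower-triangular with $1$'s on the diagonal: indeed $P$ is then forced to be lower-triangular with $1$'s on the diagonal as well, because the $(i,i)$ entry of $M$ being $1$ with $D$ having a leading $1$ in column $i$ forces $P_{ii}=1$, and induction down the rows eliminates the strictly-upper part. So the whole theorem reduces to constructing the $\psi_j$ and computing $M$.

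First I would recall, as set up in Section~\ref{sec:preli}, that each $\psi_j$ is of the form $\varphi_j^G=\Ind_U^G\varphi_j$ for a suitable $\varphi_j\in\Irr(U)$ taken from the explicit list in \cite{LM15, GLM17}; such induced characters are $\ell$-projective because $U$ is a Sylow $2$-subgroup and $\ell\neq 2$, so $|U|$ is invertible mod $\ell$ and the trivial $\F_\ell U$-module is projective, whence any induced module from $U$ is projective. Next I would invoke Proposition~\ref{prop:phiG}, which expresses $\langle\psi_j,\chi_i\rangle$ purely in terms of (a) the values $\chi_i(u_k)$ on the fourteen unipotent class representatives $u_1,\dots,u_{14}$ — available from \cite[\S4.5.4]{Gec94} as polynomials in $q$ — and (b) the ``family sums'' $\cS(\varphi_j,\cC)$ over the families $\cC$ of $U$-conjugacy classes fusing into each $u_k^G$, which in turn depend only on the fusion data tabulated in Section~\ref{sec:fusio} and the explicit character formulas. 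The concrete choices of $\varphi_j$ are spelled out in \S\ref{sub:114}--\S\ref{sub:6789} and for $\psi_{13}$ in its own subsection: the linear characters with parameters $\ub$ as in Table~\ref{tab:val14} give $\psi_1,\dots,\psi_5,\psi_{10},\dots,\psi_{12},\psi_{14}$ (and $\psi_6',\psi_7'$); the degree-$q^3/2$ characters $\chi_{8,9,10,q^3/2}^{s,t,1,1,1,1}$ with parameters as in Table~\ref{tab:psi69} give the crucial block $\psi_6,\psi_7,\psi_8,\psi_9$; and the character $\chi_{5,6,7}^{1,1,1,0,1,d}$ with $d\neq 0,1$ gives $\psi_{13}$.

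Then I would carry out the family-sum computations. Most of these are routine applications of Lemma~\ref{lem:Gauss} (together with the observations $\phi(\mu)=-1$ and that $t\mapsto t^2$ permutes $\F_q^\times$), exactly as illustrated in the worked examples $\cS(\chi_{8,9,10,q^3/2}^{s,t,1,1,1,1},\cC_{3,q^8,1}^7)=-\tfrac{\phi(t)q^2(q-2)}{4}$ and $\cS(\chi_{5,6,7}^{1,1,1,0,1,d},\cC_{1,2,4,q^6}^{12,p=2})=4q$. Assembling all the sums and plugging into Proposition~\ref{prop:phiG} — a finite, mechanical computation, done with CHEVIE \cite{GHL+96} — produces the matrix $M$ displayed in Table~\ref{tab:decn}. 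One reads off that $\langle\chi_i,\psi_i\rangle=1$ for all $i$ and $\langle\chi_i,\psi_j\rangle=0$ whenever $1\le i<j\le 14$, i.e.\ $M$ is lower-unitriangular, which by the reduction above yields the unitriangular shape of $D$.

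The main obstacle is not any single deep step but rather ensuring the correctness and genuine ($\ell$-independent, non-negative, integral) nature of all fourteen columns of $M$ simultaneously — in particular verifying the fusion of the $2q^5+O(q^4)$ conjugacy classes of $U$ into the $14$ unipotent classes of $G$ (Section~\ref{sec:fusio} and the Appendix), since an error there would corrupt the family sums. The subtle part within that is the case $p=2$ behaviour of the degree-$q^3/2$ characters $\psi_6,\dots,\psi_9$: these have no analogue among the generalized Gelfand--Graev characters used for odd $p$ in \cite[Section~5]{GP92}, so one must check directly, using the piecewise formula for $\chi_{8,9,10,q^3/2}^{s,t,1,1,1,1}$ supported on the set $Z$, that inducing them yields exactly the $4\times 4$ identity block on $\chi_6,\dots,\chi_9$ and consistent non-negative entries elsewhere — and likewise that the replacement character $\chi_{5,6,7}^{1,1,1,0,1,d}$ for $\psi_{13}$ behaves as the bad-prime substitute for $\Phi_{13}$. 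Once these columns are verified against the independent Harish-Chandra-induction computation (available for all columns except $\psi_{13}$, as remarked after Proposition~\ref{prop:phiG}), the theorem follows.
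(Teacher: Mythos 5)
Your proposal is correct and follows essentially the same route as the paper: induce the explicitly chosen characters of the Sylow $2$-subgroup $U$ (linear ones, the degree-$q^3/2$ ones for $\psi_6,\dots,\psi_9$, and $\chi_{5,6,7}^{1,1,1,0,1,d}$ for $\psi_{13}$), compute the multiplicities via Proposition \ref{prop:phiG} using the fusion data of Section \ref{sec:fusio} and the Gauss-sum Lemma \ref{lem:Gauss}, and read off the lower-unitriangular matrix of Table \ref{tab:decn}. The only (harmless) deviation is notational, in writing $M=PD$ rather than $M=DP$ for the standard reduction from a unitriangular matrix of projective multiplicities to unitriangularity of the decomposition matrix itself.
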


Finally, we notice that the columns corresponding 
to every $\ell$-projective character in Table \ref{tab:decn}, except $\psi_6, \dots, \psi_9$ 
and $\psi_{13}$, do indeed give the same values as 
obtained in \cite[\S5]{GP92} with respect to the 
projectives $\Phi_1, \dots, \Phi_5$, $\Phi_7, \Phi_8$, $\Phi_{10}, \Phi_{11}, \Phi_{12}$ and 
$\Phi_{14}$, which are obtained by Harish-Chandra 
induction of projective covers of Steinberg characters of 
Levi subgroups. The labels for such Levi subgroups in terms of simple roots correspond to the labels of the characters 
of $\Irr(U)$ that we inflate and induce to 
obtain each of the $\psi_i$ or $\psi_i'$. 
Many of these $\ell$-projective characters of 
$G$ are not indecomposable. We will see in 
the next section that we obtain projective indecomposable 
characters of $G$ by inducing projective characters of such Levi subgroups which are not 
projective covers of Steinberg characters.

\begin{small}
\begin{center}
\begin{table}[t]
\scalebox{0.95}{
\begin{tabular}{|c|cccccccccccccccc|}
\hline
& $\psi_1$ & $\psi_2$ & $\psi_3$ & $\psi_4$ & $\psi_5$ & $\psi_6$ & $\psi_6'$ & $\psi_7$ & $\psi_7'$ & $\psi_8$& $\psi_9$ & $\psi_{10}$ & $\psi_{11}$ & $\psi_{12}$ & $\psi_{13}$ & $\psi_{14}$ \\
\hline
$\chi_1$
& $1$
& \multicolumn{1}{:c}{$\cdot$}
& $\cdot$
& $\cdot$
& $\cdot$
& $\cdot$
& $\cdot$
& $\cdot$
& $\cdot$
& $\cdot$
& $\cdot$
& $\cdot$
& $\cdot$
& $\cdot$
& $\cdot$
& $\cdot$
\\
\cdashline{1-3}
$\chi_2$ 
& $4$
& \multicolumn{1}{:c}{$1$}
& \multicolumn{1}{:c}{$\cdot$}
& $\cdot$
& $\cdot$
& $\cdot$
& $\cdot$
& $\cdot$
& $\cdot$
& $\cdot$
& $\cdot$
& $\cdot$
& $\cdot$
& $\cdot$
& $\cdot$
& $\cdot$
\\
\cdashline{1-1} \cdashline{3-6}
$\chi_3$ 
& $3$
& $1$
&\multicolumn{1}{:c}{$1$} 
& $\cdot$
& $\cdot$
& \multicolumn{1}{:c}{$\cdot$}
& $\cdot$
& $\cdot$
& $\cdot$
& $\cdot$
& $\cdot$
& $\cdot$
& $\cdot$
& $\cdot$
& $\cdot$
& $\cdot$
\\
$\chi_4$ 
& $3$
& $1$
&\multicolumn{1}{:c}{$0$} 
& $1$
& $\cdot$
& \multicolumn{1}{:c}{$\cdot$}
& $\cdot$
& $\cdot$
& $\cdot$
& $\cdot$
& $\cdot$
& $\cdot$
& $\cdot$
& $\cdot$
& $\cdot$
& $\cdot$
\\
$\chi_5$ 
& $3$
& $1$
&\multicolumn{1}{:c}{$0$} 
& $0$
& $1$
& \multicolumn{1}{:c}{$\cdot$}
& $\cdot$
& $\cdot$
& $\cdot$
& $\cdot$
& $\cdot$
& $\cdot$
& $\cdot$
& $\cdot$
& $\cdot$
& $\cdot$
\\
\cdashline{1-1} \cdashline{4-12}
$\chi_6$ 
& $2$
& $1$
& $1$ 
& $1$ 
& $1$ 
& \multicolumn{1}{:c}{$1$}
& $1$ 
& $\cdot$
& $\cdot$
& $\cdot$
& $\cdot$
& \multicolumn{1}{:c}{$\cdot$}
& $\cdot$
& $\cdot$
& $\cdot$
& $\cdot$
\\
$\chi_7$ 
& $6$
& $3$
& $1$
& $1$
& $1$
& \multicolumn{1}{:c}{$0$}
& $0$
& $1$
& $1$
& $\cdot$
& $\cdot$
& \multicolumn{1}{:c}{$\cdot$}
& $\cdot$
& $\cdot$
& $\cdot$
& $\cdot$
\\
$\chi_8$ 
& $ 8 $
& $4$
& $2$
& $2$
& $2$
& \multicolumn{1}{:c}{$0$}
& $1$
& $0$
& $1$
& $1$
& $\cdot$
& \multicolumn{1}{:c}{$\cdot$}
& $\cdot$
& $\cdot$
& $\cdot$
& $\cdot$
\\
$\chi_9$ 
& $0$
& $0$
& $0$
& $0$
& $0$
& \multicolumn{1}{:c}{$0$} 
& $0$
& $0$
& $0$
& $0$
& $1$
& \multicolumn{1}{:c}{$\cdot$} 
& $\cdot$
& $\cdot$
& $\cdot$
& $\cdot$
\\
\cdashline{1-1} \cdashline{7-15}
$\chi_{10}$ 
& $3$
& $2$
& $2$
& $1$
& $1$
& $\frac{q}{2}$
& $1$
& $\frac{q}{2}$
& $1$
& $\frac{q}{2}$
& $\frac{q}{2}$
& \multicolumn{1}{:c}{$1$}
& $\cdot$
& $\cdot$
& \multicolumn{1}{:c}{$\cdot$}
& $\cdot$
\\
$\chi_{11}$ 
& $3$
& $2$
& $1$
& $2$
& $1$
& $\frac{q}{2}$ 
& $1$
& $\frac{q}{2}$
& $1$
& $\frac{q}{2}$
& $\frac{q}{2}$
& \multicolumn{1}{:c}{$0$}
& $1$
& $\cdot$
& \multicolumn{1}{:c}{$\cdot$}
& $\cdot$
\\
$\chi_{12}$ 
& $3$
& $2$
& $1$ 
& $1$
& $2$
& $\frac{q}{2}$ 
& $1$
& $\frac{q}{2}$
& $1$
& $\frac{q}{2}$
& $\frac{q}{2}$
& \multicolumn{1}{:c}{$0$}
& $0$
& $1$
& \multicolumn{1}{:c}{$\cdot$}
& $\cdot$
\\
\cdashline{1-1} \cdashline{13-16}
$\chi_{13}$ 
& $4$
& $3$
& $2$
& $2$
& $2$
& $\frac{q^2-q}{2}$
& $1$
& $\frac{q^2-q}{2}$ 
& $2$
& $\frac{q^2-q}{2}$
& $\frac{q^2-q}{2}$
& $1$
& $1$
& $1$
& \multicolumn{1}{:c}{$1$}
& \multicolumn{1}{:c|}{$\cdot$}
\\
\cdashline{1-1} \cdashline{16-17}
$\chi_{14}$
& $1$
& $1$
& $1$
& $1$
& $1$
& $\frac{q^3}{2}$ 
& $1$
& $\frac{q^3}{2}$
& $1$
& $\frac{q^3}{2}$
& $\frac{q^3}{2}$
& $1$
& $1$
& $1$
& $q$
& \multicolumn{1}{:c|}{$1$}
\\
\hline
\end{tabular}
}
\caption{The unitriangular shape of the matrix $(\langle \chi_i, \psi_j\rangle)_{i, j}$ as in Section \ref{sec:unitr}, which implies the unitriangularity of the 
$\ell$-decomposition matrices of $G$ and certain upper bounds for the $\ell$-decomposition numbers.}
\label{tab:decn}
\end{table}
\end{center}
\end{small}
\vspace{-5mm}

\section{On the decomposition numbers of $\SO_8^+(2^f)$ when $\ell \mid q+1$}\label{sec:decnu}

The goal of this final section is to determine the $\ell$-decomposition 
matrix of $\SO_8^+(q)$ when $q=2^f$, and $\ell \ge 5$ is a prime number %dividing the order of $G$ 
such that $\ell \mid q+1$ and $(q+1)_\ell > 5$. 
%this assumption %on $\ell$ 
%is crucial, namely 
Not all methods used in the sequel can be applied when 
$(q+1)_\ell \in \{3, 5\}$, in which case one expects some entries of the $\ell$-decomposition matrices %the uniformity %a uniform description 
to be different from the ones in Table \ref{tab:finald}; see for instance \cite[Theorem 4.3]{HN14} and 
\cite[Remark 3.2]{DM16}. Notice that the assumption $\ell \ge 5$ clearly implies $q > 2$. 

%we expect different values of the 
%decomposition matrix in these cases, 
%it is expected in this case to have 
%a different decomposition matrix, 
%We can further assume that $\ell \mid 
%q+1$. 
We briefly explain as follows how to obtain 
the decomposition matrices in the remaining cases; the 
calculations are straightforward. 

\begin{itemize}
\item[-] If $\ell \mid q-1$, then the decomposition matrix of 
$G$ is determined in \cite[Section 8]{GH97b} by means of 
decomposition matrices of $q$-Schur algebras. 
\item[-] If $\ell \mid q^2+1$ and $(q^2+1)_\ell > 5$, the decomposition matrix of $G$ is 
obtained in \cite[Table 1]{DM16} by using 
the methods recalled later in this section 
and certain other techniques. As in \cite[Remark 3.2]{DM16}, %remarked at the end of the section, 
some of the entries of the decomposition matrix in the case 
$(q^2+1)_\ell=5$ are different from those in \cite[Table 1]{DM16}. 
%Here the assumption $(q+1)_\ell > 5$ 
%is crucial. 
\item[-] If $\ell \nmid q \pm 1$ and $\ell \nmid q^2+1$, then the defect group of 
the principal $\ell$-block is cyclic, since it is a Sylow $\ell$-subgroup of $G$. In 
this case, the decomposition numbers of $G$ are encoded in its Brauer tree. Brauer trees have been 
determined in \cite{FS90} for the groups $\SO_8^+(p^f)$ when $p \ge 3$. 
It is a straightforward computation by using Harish-Chandra induction 
to verify that the Brauer trees are the same in the case 
of $p=2$; this is for instance clarified in \cite{CK17$^+$}. 
\end{itemize}

\vspace{-0.1mm}
In the rest of this work, we determine the $\ell$-decomposition numbers of $G$ when $\ell \mid q+1$, up to 
two parameters $\alpha, \beta \in \Z_{\ge 0}$ in the ninth column of Table \ref{tab:finald}. 
Our strategy is to exploit the construction of the $\ell$-projective characters $\psi_1, \dots, \psi_{14}$ and $\psi_6', \psi_7'$ 
in Section \ref{sec:unitr}, to obtain information on 
$\ell$-projective indecomposable characters $\Psi_1, \dots, \Psi_{14}$ of $G$. We say that 
each $\Psi_i$, $i=1, \dots, 14$ is a character of a PIM, which stands for projective indecomposable module. 

The 
methods we employ are among those outlined in 
\cite{DM16} and \cite[Section 4]{Dud17}. We collect and label such methods as follows. We denote by $R_w$ 
the virtual Deligne-Lusztig character $R_w(1)$ corresponding to $w \in W$ and $1=1_{\mathbf{T}^{\dot{w}F}} \in \Irr(\mathbf{T}^{\dot{w}F})$, and by $\ell(w)$ the 
length of $w$. 

\textbf{(Uni)} The unipotent part of the decomposition matrix of $G$ has lower 
unitriangular shape.

\textbf{(Sum)} If $\chi_1$ is a character of a PIM, and $\chi_1+\chi_2$ is 
a projective character, then $\chi_2$ is also a projective character. 

\textbf{(HC)} The Harish-Chandra induction and 
restriction of a projective character is 
again projective. 

\textbf{(Reg)} Let $w \in W$, and let us 
assume that there exists an $\ell$-character of 
$\mathbf{T}^{\dot{w}F}$ in general position.  
Then we have that 
$\langle \Psi, (-1)^{\ell(w)}R_w \rangle \ge 0$ for every projective character $\Psi$. 

\textbf{(DL)} If $\Psi$ is a projective 
character of $G$, and $w$ is minimal in the 
Bruhat order such that the unipotent part of 
$\Psi$ occurs in $R_w$, then $\langle 
\Psi, (-1)^{\ell(w)}R_w \rangle >0$. 

By \cite[Proposition 7.4]{DL76}, if a character 
$\theta$ is in general position then we 
have that $(-1)^{\ell(w)}R_w(\theta)$ is an 
irreducible character. Note that (Uni) is indeed satisfied by Theorem \ref{theo:uni}. 
(Sum) is a straightforward check. We recall that every $\ell$-projective module 
for $G$ is a direct sum of PIMs of $G$. 
The statement in (Reg) 
follows from \cite[Lemma 1.1]{Dud13}, while 
(DL) is a character-theoretical consequence of \cite[Section 8]{BR03}. 

Before we start the analysis, we recall 
some useful information about $G$. There are $13$ unipotent characters 
lying in the principal $\ell$-block 
when $\ell \mid q+1$; the character $\chi_8$ forms 
an $\ell$-block on its own. This can be readily checked 
for instance by applying \cite[Theorem A]{KM15}. 
Let $w_0$ be the longest element of $W$. Then 
$w_0F$ acts on $X(\bT)$ as $-q$, hence 
$\Irr(\mathbf{T}^{\dot{w_0}F})=X(\bT)/(w_0F-\text{id})X(\bT)
\cong (\Z/(q+1)\Z)^4$ and 
$|\mathbf{T}^{\dot{w_0}F}|=(q+1)^4$. 
As $\ell \ge 5$, we have that $\ell \nmid |W|$, and an $\ell$-character of $\mathbf{T}^{\dot{w_0}F}$ in general position corresponds to 
an irreducible character of $(X(\mathbf{T})/(q+1)X(\mathbf{T}))^4$ 
lying outside the reflection hyperplanes of 
$W=C_W(w_0F)$ in the reflection representation of $W$ 
on $X(\bT)/(w_0F-\text{id})X(\bT)$. Notice that if $(q+1)_\ell \le 5$ then 
such reflection hyperplanes cover the whole of $(\Z/(q+1)_\ell \Z)^4$. 
On the other hand, if 
$(q+1)_\ell > 5$ then one easily finds an $\ell$-element in $(\Z/(q+1) \Z)^4$; 
hence the assumptions of 
(Reg) are satisfied in this case. 

The program 
CHEVIE promptly gives the 
decomposition into unipotent characters 
of $R_w$ for every $w \in W$. In particular, 
$$R_{w_0}=\chi_1-4\chi_2+3\chi_3+3\chi_4+3\chi_5-6\chi_6-2\chi_7-8\chi_9+3\chi_{10}+3\chi_{11}+3\chi_{12}-4\chi_{13}+\chi_{14}.$$
If $\bL$ is an $F$-stable Levi subgroup of $\bG$, we denote by $R_{\bL}^{\bG}(\varphi)$ (respectively ${}^*R_{\bL}^{\bG}(\chi)$) the 
Harish-Chandra induction (respectively restriction) of $\varphi \in \Irr(L)$ (respectively $\chi \in \Irr(G)$). 
In particular, we denote by $\bL_{i_1, \dots, i_m}$ the Levi subgroup corresponding to the 
parabolic subgroup $\langle s_{i_1}, \dots, s_{i_m} \rangle$ of $W$. 
By slight abuse of notation, for a projective character $\Psi$ we also denote by 
$\Psi$ its unipotent part in the principal $\ell$-block of $G$. Of course by (Uni) 
we have that $\Psi_{14}=\psi_{14}$.  

\subsection{The projective indecomposables $\Psi_{10}, \Psi_{11}, \Psi_{12}$ and properties of $\Psi_{13}$} \label{sec:pr13}
For $i=10, 11, 12$, we have that 
$$\langle \psi_{i}, R_{w_0} \rangle=0, \,\, \text{and} \,\, \langle \chi_{i}+\chi_{13}, R_{w_0} \rangle\ne 0,  \,\,
\langle \chi_{i}+\chi_{14}, R_{w_0} \rangle \ne 0, \,\, \langle \chi_{13}+\chi_{14}, R_{w_0} \rangle \ne 0.$$
By (Reg), we have that $\psi_{i}$ is a character of a PIM for $i=10, 11, 12$, that is, $\Psi_i=\psi_i$. We apply 
(Reg) on $\Psi_{13}$. Write $\Psi_{13}=\chi_{13}+\delta\chi_{14}$. Then we have 
$$0 \le \langle \Psi_{13}, R_{w_0} \rangle = \langle \chi_{13}+\delta\chi_{14}, R_{w_0} \rangle 
=\langle \chi_{13}, R_{w_0} \rangle +\delta\langle \chi_{14}, R_{w_0} \rangle=-4+\delta,$$  
hence $\delta \ge 4$. By Table \ref{tab:decn}, we have that none of $\psi_6'$, $\psi_7'$ and  $\psi_i$ 
with $i \in \{1, \dots, 14\}\setminus\{9\}$ can be decomposed as $\Psi_{13}+\nu_{13}$ for some 
(projective) character $\nu_{13}$. 

%\subsection{The projective indecomposable $\Psi_{8}$} As previously recalled, the unipotent character $\chi_8$ constitutes on its own 
%the only unipotent block which is not the principal $\ell$-block. Hence $\Psi_8=\chi_8$. Moreover, %
%we have that $\langle \Psi_i, \chi_8 \rangle = 0$ for every $i=1, \dots, 14$ with $i \ne 8$. 

\subsection{The projective indecomposables $\Psi_7$ and $\Psi_6$} Some upper bounds for the columns corresponding 
to $\Psi_7$ and $\Psi_6$ are given by the projective characters $\psi_7'$ and $\psi_6'$ as in 
Section \ref{sec:unitr}. We show here that these bounds are tight. In order to do this, we need some 
notation on the decomposition matrices of $\rA_3(q)$ as in \cite{Jam90}. The unipotent 
characters in type $\rA_{n-1}$ are parametrized by the partitions of $n$. Given a partition $\lambda=(\lambda_1^{n_1} \cdots
\lambda_s^{n_s})$ 
of $n$ with $\lambda_1 > \dots > \lambda_s$, we denote by $\rho_{\lambda}$ the corresponding unipotent character. Moreover, we denote 
by $\eta_1, \dots, \eta_5$ the characters of the PIMs in $\rA_3$ that label the 
columns of its $\ell$-decomposition matrix when 
$\ell \mid q+1$ in \cite[Appendix 1]{Jam90}, namely 
\begin{align*}
&\eta_1=\rho_{(4)}+\rho_{(31)}+\rho_{(21^2)}+\rho_{(1^4)}, \qquad \eta_2=\rho_{(31)}+\rho_{(2^2)}+\rho_{(21^2)},\\
&\eta_3=\rho_{(2^2)}+\rho_{(21^2)},\qquad
\eta_4=\rho_{(21^2)}+\rho_{(1^4)},\qquad
\eta_5=\rho_{(1^4)}.
\end{align*}

Notice that $\langle \psi_7', R_{w_0} \rangle = \langle \psi_6', R_{w_0} \rangle = 0$. We first examine $\psi_7'$. 
By (Uni), $\psi_7'$ is a non-negative integral combination of $\Psi_7$ and $\Psi_{10}, \dots, \Psi_{14}$. Suppose 
that $\psi_7'=\Psi_{10}+\nu_{10}^7$. 
Then $\nu_{10}^7=\chi_7+\chi_{11}+\chi_{12}+\chi_{13}$, and by (Sum) the character $\nu_{10}^7$ is projective. This is a contradiction, 
namely by (HC) we should have in this case that ${}^*R_{\bL_{1, 3, 4}}^{\bG}(\nu_{10}^7)$ is also a projective character, but 
$${}^*R_{\bL_{1, 3, 4}}^{\bG}(\nu_{10}^7)=\rho_{(31)}+4\rho_{(21^2)}+\rho_{(1^4)}=\eta_2-\eta_3+4\eta_4-3\eta_5,$$
which is a combination of 
$\eta_1, \dots, \eta_5$ 
also with negative coefficients. Similarly, 
we deduce that $\psi_7'=\Psi_{i}+\nu_i^7$ with $i \in \{11, 12\}$ cannot happen. By \S\ref{sec:pr13}, we 
have that $\Psi_{13}$ is not a summand of $\psi_7'$, and now we apply (Reg) to see that $\Psi_{14}$ cannot 
be either. Hence $\Psi_7=\psi_7'$ is a character of a PIM. 

A similar argument applies for the character $\psi_6'$. We just show that one cannot write $\psi_6'=\Psi_{10}+\nu_{10}^6$. 
Namely if this were the case instead, again by (Sum) the character $\nu_{10}^6=\chi_6+\chi_{11}+\chi_{12}$ would be projective, but we have 
that 
$${}^*R_{\bL_{1, 3, 4}}^{\bG}(\nu_{10}^6)=\rho_{(2^2)}+2\rho_{(21^2)}=\eta_3+\eta_4-\eta_5,$$
which leads to a contradiction. The character $\Psi_6=\psi_6'$ is then projective indecomposable. 

\subsection{The projective indecomposables $\Psi_{3}, \Psi_4$ and $\Psi_5$} We determine the projective indecomposable character $\Psi_5$; for $\Psi_4$ and $\Psi_3$ we apply the triality 
automorphism $\tau$ to get the same conclusion. 

By (HC), we have that 
$$\psi_5':=R_{\bL_{1, 3, 4}}^\bG(\eta_3)
=\chi_3+\chi_6+\chi_7+\chi_{10}+\chi_{11}
+\chi_{12}+\chi_{13}$$
is a projective character. Since $\langle 
\psi_5', \chi_{14} \rangle =0$, by (Uni) and 
the construction of $\Psi_6, \Psi_7$ and 
$\Psi_{10}, \dots, \Psi_{14}$ it is easy to 
see that $\psi_5'$ is indecomposable, that is, 
$\Psi_5=\psi_5'$. Similarly, we get 
$\Psi_4=\psi_4'$ and $\Psi_3=\psi_3'$.

\subsection{The projective indecomposable $\Psi_2$} Let us consider
$$\psi_2':=R_{\bL_{1,2,3}}^{\bG}(\eta_2)=\chi_2+\chi_3+\chi_4+\chi_5+\chi_6+2\chi_7+\chi_{10} 
+\chi_{11}+\chi_{12}+\chi_{13}.$$
By (HC), $\psi_2'$ is a projective character of $G$. We now claim that $\Psi_2=\psi_2'$.  
By looking at $\langle \chi_{14}, \Psi_i \rangle$ for $i \ge 3$, we have that either $\psi_2'$ is 
projective, or $\psi_2'=\Psi_i+\nu_i^2$ with $i \in \{3, 4, 5, 14\}$ and $\nu_i^2$ projective by (Sum). 
Let us for example assume $i=3$. Then $\nu_3^2=\chi_2+\chi_4+\chi_5+\chi_7$, and 
$${}^*R_{\bL_{1, 2, 3}}^{\bG}(\nu_3^2)=\rho_{(4)}+4\rho_{(31)}+\rho_{(21^2)}=\eta_1+3\eta_2-3\eta_3-\eta_5,$$
which leads to a contradiction. The same argument applies in the case $i\in \{4, 5\}$. Since 
$\langle \psi_2', R_{w_0} \rangle=0$, 
by (Reg) we have that $\psi_2'=\Psi_{14}+\nu_{14}^2$ cannot occur. 
The claim follows. 

\subsection{The projective indecomposable $\Psi_1$} We now consider 
$$\psi_1':=R_{\bL_{1,2,3}}^{\bG}(\eta_1)
=\chi_1+2\chi_2+\chi_3+\chi_4
+\chi_5+2\chi_7+
\chi_{10}+\chi_{11}+\chi_{12}+2\chi_{13}
+\chi_{14},$$
which is projective by (HC). Let us suppose 
that $\psi_1'$ is decomposable. In this 
case, by (Uni) and the fact that $\langle \psi_1', 
\chi_6\rangle=0$, we would have that 
$\psi_1'=\Psi_i+\nu_i^1$ for some 
$i \in \{7, 10, 11, 12, 14\}$, with 
$\nu_i^1$ projective by (Sum). 

Assume first that $i=7$. Then $\nu_7^1=\chi_1+2\chi_2+\chi_3+\chi_4+\chi_5+\chi_7$, and 
$${}^*R_{\bL_{1, 2, 3}}^\bG(\nu_6^1)=
4\rho_{(4)}+5\rho_{(31)}+\rho_{(2^2)}+\rho_{(21^2)}=
4\eta_1+\eta_2-4\eta_4,$$
which by (HC) contradicts the projectivity of $\nu_6^1$. If 
$i=10$, then we have that $\nu_{10}^1=\chi_1+2\chi_2+\chi_3+\chi_4+\chi_5+2\chi_7
+\chi_{11}+\chi_{12}+\chi_{13}$, and 
$${}^*R_{\bL_{1, 3, 4}}^\bG(\nu_{10}^1)=
4\rho_{(4)}+6\rho_{(31)}+\rho_{(2^2)}+5\rho_{(21^2)}+\rho_{(1^4)}=
4\eta_1+2\eta_2-\eta_3-3\eta_5,$$
again a contradiction. The same argument holds when 
$i=11$ or $i=12$ by applying $\tau$. We then notice 
that $\langle \psi_1', R_{w_0} \rangle=0$ and by (Reg) 
we deduce that $\psi_1'=\Psi_{14}+\nu_{14}^1$ cannot hold. Hence 
$\Psi_1=\psi_1'$ is a character of a PIM. 

\subsection{The projective indecomposable $\Psi_{13}$ 
and properties of $\Psi_9$} Finally, to obtain information 
on $\Psi_{13}$ and on the character $\Psi_9$ that corresponds to 
the cuspidal character $\chi_9$, we use (DL). We manage to 
show that the lower bound previously obtained for the 
constant $\delta$ is tight, and to determine the decomposition 
of $\Psi_9$ in terms of two non-negative %integer 
parameters $\alpha$ and $\beta$. 
The determination of $\alpha$ and $\beta$, or 
even a universal bound for them, seems to be 
more complicated to obtain, as in the case of $\SO_8^+(p^f)$ 
with $p \ge 3$ where similar parameters show up; this is beyond 
the methods of this work.  
We can still give linear upper bounds in $q$ for $\alpha$ and $\beta$ 
by Table \ref{tab:decn} and by using the inequality $
\beta \le 3\alpha-1$ of the following computations; 
in this way we have 
$\alpha \le q/2$, $\beta 
\le (3q-2)/2$ and $-9\alpha+4\beta+8 \ge 0$. 

We first write $\Psi_{13}=\chi_{13}+\delta \chi_{14}$, with 
$\delta \ge 4$ by \S\ref{sec:pr13}, and 
$$\Psi_9=\chi_9+\alpha\chi_{10}+\alpha\chi_{11}+\alpha\chi_{12}
+\beta\chi_{13}+\gamma\chi_{14}.$$
Here we used that $\Psi_9$ is fixed by $\tau$ and that 
$\chi_{10}$, $\chi_{11}$ and $\chi_{12}$ are permuted 
by $\tau$, to deduce that $\langle \Psi_9, \chi_{10} \rangle=
\langle \Psi_9, \chi_{11} \rangle=\langle \Psi_9, \chi_{12} \rangle$. We apply (Reg) and we obtain 
\begin{equation}\label{eq:gamma1}
0 \le \langle \Psi_9, R_{w_0} \rangle = -8+9\alpha-4\beta+\gamma, 
%\Longrightarrow \gamma \ge -9\alpha+4\beta+8.
\end{equation}
hence $\gamma \ge -9\alpha+4\beta+8$. 

It is straightforward to produce a program in CHEVIE that 
for each $w \in W$ decomposes 
$$R_w=\sum_{i=1}^{14} a_{w, i} \Psi_i$$
for some $a_{w, i} \in \Z_{\ge 0}$ 
which depend on the parameters $\alpha$, $\beta$, 
$\gamma$ and $\delta$. 
In the above decomposition, the characters $\Psi_{13}$ and 
$\Psi_{14}$ possibly occur on $R_c$ for an arbitrary Coxeter element $c$ of $W$, 
with 
$$ 
a_{c, 13}=
3\alpha-\beta-1 \qquad \text{and} \qquad 
a_{c, 14}=
-\delta(3\alpha-\beta-1) 
+3\alpha-\gamma+4,$$
and we have that $ a_{w, 13} = 
a_{w, 14}=0$ for every 
$w \lneq c$ in the Bruhat order. 
As $\ell(c)=4$, by (DL) we have that $3\alpha-\beta-1 \ge 0$, and that 
\begin{equation}\label{eq:gamma2}-\delta(3\alpha-\beta-1) 
+3\alpha-\gamma+4 \ge 0.
\end{equation}
By combining Equations \eqref{eq:gamma1} and 
\eqref{eq:gamma2}, and recalling that $\delta \ge 4$, we get
$$-9\alpha+4\beta+8\le \gamma \le 
-\delta(3\alpha-\beta-1)+3\alpha+4 \le 
-4(3\alpha-\beta-1)+3\alpha+4 =
-9\alpha+4\beta + 8,$$
hence $\gamma=-9\alpha+4\beta + 8$. By substituting 
$\gamma$ into Equation \eqref{eq:gamma2} one gets 
\begin{equation}\label{eq:delta}
-\delta(3\alpha-\beta-1)+4(3\alpha-\beta-1) \ge 0 
\Longrightarrow (\delta-4)(3\alpha-\beta-1) \le 0.
\end{equation}

Let us first assume that $3\alpha-\beta-1 \ge 1$. 
Then by Equation \eqref{eq:delta} we have that $\delta \le 4$, hence $\delta=4$. In this case, we have that $a_{c, 14}=0$. We 
then check that the only element $w \in W$ such that 
$a_{w, 14} \ne 0$ is $w_0$, with 
$a_{w_0, 14}=192$. Hence (DL) does not give 
more information on $\alpha$ and $\beta$. 

Finally, let us now assume that $3\alpha-\beta-1=0$, 
that is, $\beta=3\alpha-1$. Then 
$\gamma=3\alpha+4$, and $a_{c, 13}
=a_{c, 14}=0$. Moreover, we have that 
$a_{w, 13}=a_{w, 14}=0$ for every $w$ of length $5$ 
or less. 
The element $w'=s_1s_2s_3s_1s_4s_3$ 
is minimal in the Bruhat order 
such that $a_{w', 13}$ and $a_{w', 14}$ can possibly 
be nonzero; one has 
$$a_{w', 13}=4 \qquad \text{and} \qquad 
a_{w', 14}=-4\delta+16.$$
Since $\ell(w')=6$, by (DL) we have that 
$-4\delta+16\ge 0$, hence $\delta \le 4$, which 
implies $\delta=4$. In this case we have $a_{w, 14}
\ne 0$ for every $w \in W \setminus\{w_0\}$, and 
$a_{w_0, 14}=192$. The method (DL) cannot be applied 
further to get more information on $\alpha$.

\section*{Appendix: computations of the fusion of $U$-conjugacy classes into $G$}

We collect in this appendix 
a table which contains the information on the fusion of the conjugacy classes 
of $U$ into $G$, namely Table \ref{tab:fusiond4}. The first column labels the families of $U$-conjugacy class 
representatives as in \cite[Section 3]{GLM17}, with expressions given in the second column 
as products of root elements; each element is in the unipotent conjugacy class 
$u_k^G$ in the third column. Recall that $\mu$ is a fixed element in $\F_q^\times$ of trace $1$. 

\begin{center}
\begin{small}
 \begin{longtabu}{|c|c|c|} 
 \hline
Label & Representative in $U$   & Fusion in $G$ \\
\hline
\hline
\endhead
\hline
\endfoot
\hline \caption{The fusion of the conjugacy classes of $U$ into $G$.} \label{tab:fusiond4}
\endlastfoot
\label{tab:long}
$\cC^{1}$  & $x_{12}(0) $ &     $u_{1}^G$ \\
\hline
\hline
$\cC_{i}^{2}, i=1, \dots, 12$ & $x_{i}(a_{i}) $ & 
$u_{2}^G$ \\
\hline
\hline
$\cC_{1, 2, q^8}^{3}$  & $x_1(a_1)x_2(a_2) $ &     
\multirow{5}{*}{$ u_{3}^G$} \\
\cline{1-2}
$\cC_{4, q^8}^{3}$  & $x_4(a_4)x_{12}(a_{12}) $ &      \\
\cline{1-2}
$\cC_{3, q^8}^{3}$  & $x_3(a_3)x_8(a_8) $ &      \\
\cline{1-2}
$\cC_{5, 6, q^9}^{3}$  & $x_5(a_5)x_6(a_6) $ &      \\
\cline{1-2}
$\cC_{7, q^9}^{3}$  & $x_7(a_7)x_{11}(a_{11}) $ &      \\
\cline{1-2}
$\cC_{8, 9, 10, q^{10}}^{3}$  & $x_9(a_9)x_{10}(a_{10}) $ &      \\
\hline
\hline
$\cC_{1, 4, q^8}^{4}$  & $x_1(a_1)x_4(a_4) $ &    
\multirow{5}{*}{$ u_{4}^G$} \\
 \cline{1-2}
 $\cC_{2, q^8}^{4}$  & $x_2(a_2)x_{12}(a_{12}) $ &      \\ 
 \cline{1-2}
 $\cC_{3, q^8}^{4}$  & $x_3(a_3)x_9(a_9) $ &      \\
\cline{1-2}
 $\cC_{5, 7, q^9}^{4}$  & $x_5(a_5)x_7(a_7) $ &      \\
 \cline{1-2}
 $\cC_{6, q^9}^{4}$  & $x_6(a_6)x_{11}(a_{11}) $ &      \\
 \cline{1-2}
 $\cC_{8, 9, 10, q^{10}}^{4}$  & $x_8(a_8)x_{10}(a_{10}) $ &      \\
 \hline
 \hline
$\cC_{2, 4, q^8}^{5}$  & $x_2(a_2)x_4(a_4) $ &     
\multirow{5}{*}{$ u_{5}^G$} \\
 \cline{1-2}
 $\cC_{1, q^8}^{5}$  & $x_1(a_1)x_{12}(a_{12}) $ &      \\
 \cline{1-2}
 $\cC_{3, q^8}^{5}$  & $x_3(a_3)x_{10}(a_{10}) $ &      \\
\cline{1-2}
 $\cC_{6, 7, q^9}^{5}$  & $x_6(a_6)x_7(a_7) $ &      \\
 \cline{1-2}
 $\cC_{5, q^9}^{5}$  & $x_5(a_5)x_{11}(a_{11}) $ &      \\
 \cline{1-2}
 $\cC_{8, 9, 10, q^{10}}^{5}$  & $x_8(a_8)x_9(a_9) $ &      \\
 \hline
 \hline
$\cC_{1, 2, 4, q^7}^{6, p=2}$  &  $x_1(a_1)x_2(a_2)x_{4}(a_{4}) $ &     \multirow{13}{*}{$u_6^G$} \\
 \cline{1-2}
$\cC_{1, 2, q^8}^{6}$  & $x_1(a_1)x_2(a_2)x_{12}(a_{12}) $ &      \\
 \cline{1-2}
$\cC_{1, 4, q^8}^{6}$  & $x_1(a_1)x_4(a_4)x_{12}(a_{12}) $ &      \\
 \cline{1-2}
$\cC_{2, 4, q^8}^{6}$  & $x_2(a_2)x_4(a_4)x_{12}(a_{12}) $ &     \\
 \cline{1-2}
$\cC_{5, 6, 7, q^8}^{6}$   & $x_5(a_5)x_{6}(a_{6})x_7(a_7)$ &    \\
 \cline{1-2}
$\cC_{5, 6, q^9}^{6}$  & $x_5(a_5)x_6(a_6)x_{11}(a_{11}) $ &      \\
 \cline{1-2}
$\cC_{5, 7, q^9}^{6}$  & $x_5(a_5)x_7(a_7)x_{11}(a_{11}) $ &      \\
 \cline{1-2}
$\cC_{6, 7, q^9}^{6}$  & $x_6(a_6)x_7(a_7)x_{11}(a_{11}) $ &      \\
 \cline{1-2}
$\cC_{8, 9, 10, q^{10}}^{6}$   & $x_8(a_8)x_{9}(a_{9}) x_{10}(a_{10})$ &    \\
 \cline{1-2}
$\cC_{3, q^8, 1}^{6}$   & $x_3(a_3)x_8(a_8)x_{9}(a_{9}) $ &    \\
 \cline{1-2}
$\cC_{3, q^8, 2}^{6}$   & $x_3(a_3)x_8(a_8)x_{10}(a_{10}) $ &    \\
 \cline{1-2}
$\cC_{3, q^8, 3}^{6}$   & $x_3(a_3)x_9(a_9)x_{10}(a_{10}) $ &    \\
 \cline{1-2}
$\cC_{3, q^8, 4}^{6}$   & $x_3(a_3)x_8(a_8)x_9(a_9)x_{10}(a_{10}) $ &    \\
\hline
\hline
$\cC_{1,3, q^5}^{7}$  & $x_3(a_3)x_1(a_1)$ &     \multirow{28}{*}{$ u_{7}^G$} \\
 \cline{1-2}
$\cC_{2,3, q^5}^{7}$  & $x_3(a_3)x_2(a_2)$ &      \\
 \cline{1-2}
$\cC_{3, 4, q^5}^{7}$  & $x_3(a_3)x_4(a_4)$ &      \\
 \cline{1-2}
 $\cC_{1, 2, 4, 2q^7}^{7, p=2}$  &  $x_1(a_1)x_2(a_2)x_{4}(a_{4})x_{10}(a_{10}) $
 &    \\
\cline{1-2}
 $\cC_{1, 2, q^6}^{7}$ & $x_1(a_1)x_2(a_2)x_{6}(a_{6}) $ &  \\
 \cline{1-2}
$\cC_{1, 2, q^7}^{7}$  & $x_1(a_1)x_2(a_2)x_{10}(a_{10})$ &       \\
\cline{1-2}
 $\cC_{1, 4, q^6}^{7}$ & $x_1(a_1)x_4(a_4)x_{7}(a_{7}) $ &  \\
 \cline{1-2}
$\cC_{1, 4, q^7}^{7}$  & $x_1(a_1)x_4(a_4)x_{10}(a_{10})$ &     \\
\cline{1-2}
 $\cC_{2, 4, q^6}^{7}$ & $x_2(a_2)x_4(a_4)x_{7}(a_{7}) $ &  \\
\cline{1-2}
 $\cC_{2, 4, q^7}^{7}$  & $x_2(a_2)x_4(a_4)x_{9}(a_{9})$ &      \\
 \cline{1-2}
$\cC_{1, q^6, 1}^{7}$  & $x_1(a_1)x_6(a_6)$ &      \\
\cline{1-2}
$\cC_{1, q^6, 2}^{7}$  & $x_1(a_1)x_7(a_7)$ &      \\
  \cline{1-2}
$\cC_{1, q^7}^{7}$  & $x_1(a_1)x_{10}(a_{10})$ &      \\
\cline{1-2}
$\cC_{2, q^6, 1}^{7}$  & $x_2(a_2)x_5(a_5)$ &      \\
\cline{1-2}
$\cC_{2, q^6, 2}^{7}$  & $x_2(a_2)x_7(a_7)$ &      \\
  \cline{1-2}
$\cC_{2, q^7}^{7}$  & $x_2(a_2)x_{9}(a_{9})$ &      \\
\cline{1-2}
$\cC_{4, q^6, 1}^{7}$  & $x_4(a_4)x_5(a_5)$ &      \\
\cline{1-2}
$\cC_{4, q^6, 2}^{7}$  & $x_4(a_4)x_6(a_6)$ &      \\
  \cline{1-2}
$\cC_{4, q^7}^{7}$  & $x_4(a_4)x_{8}(a_{8})$ &      \\
\cline{1-2}
 \multirow{2}{*}{$\cC_{3, q^8, 1}^{7}$}  & $x_3(d_1)x_{8}(a_{8})x_{9}(a_{9})x_{10}(a_{10})x_{11}(a_{11})$, &      \\
  & $\Tr(a_8a_9a_{10}/(d_1a_{11}^2))=0$ & \\ 
\cline{1-2}
$\cC_{3, q^8, i}^{7}, i=2, \dots, 8$  & $x_3(a_3)x_{8}(f_{8})x_{9}(f_{9})x_{10}(f_{10})x_{11}(a_{11}), f_8f_9f_{10}=0$ &      \\
\cline{1-2}
$\cC_{5, 6, 7, 2q^8}^{7}$  & $x_{5}(a_{5})x_{6}(a_{6})x_{7}(a_{7})x_{10}(a_{10})$ &      \\
\cline{1-2}
$\cC_{5, 6, q^8}^{7}$  & $x_5(a_5)x_{6}(a_{6})x_{10}(a_{10})$ &    \\
\cline{1-2}
$\cC_{5, 7, q^8}^{7}$  & $x_5(a_5)x_{7}(a_{7})x_{10}(a_{10})$ &    \\
\cline{1-2}
$\cC_{6, 7, q^8}^{7}$  & $x_6(a_6)x_{7}(a_{7})x_{9}(a_{9})$ &    \\
\cline{1-2}
$\cC_{5, q^8}^{7}$   & $x_5(a_5)x_{10}(a_{10})$ &    \\
\cline{1-2}
$\cC_{6, q^8}^{7}$   & $x_6(a_6)x_{9}(a_{9})$ &    \\
\cline{1-2}
$\cC_{7, q^8}^{7}$   & $x_7(a_7)x_{8}(a_{8})$ &    \\
\hline
\hline
$\cC_{1, 2, 4, 2q^7}^{8, p=2}$  &  $x_1(a_1)x_2(a_2)x_{4}(a_{4})x_{10}(a_{10})x_{12}(a_1a_{10}^2\mu/a_2a_4) $ 
&      \multirow{3}{*}{$u_8^G$} \\
\cline{1-2}
 \multirow{2}{*}{$\cC_{3, q^8}^{8}$}  & $x_3(d_2)x_{8}(a_{8})x_{9}(a_{9})x_{10}(a_{10})x_{11}(a_{11})$, &      \\
  & $\Tr(a_8a_9a_{10}/(d_2a_{11}^2))=1$ & \\
\cline{1-2} 
\cline{1-2}
$\cC_{5, 6, 7, 2q^8}^{8}$   & $x_5(a_5)x_{6}(a_{6})x_7(a_7)x_{10}(a_{10})x_{11}(a_5a_{10}^2\mu/a_6a_7)$ &    \\
\hline
\hline
$\cC_{1,2,3, q^5}^{9}$ & $x_3(a_3)x_1(a_1)x_2(a_2)$ & \multirow{5}{*}{$ u_{9}^G$}\\
\cline{1-2}
$\cC_{3, 4, q^5}^{9}$  & $x_3(a_3)x_4(a_4)x_{8}(a_{8})$ &      \\
\cline{1-2}
$\cC_{1,2,4,q^6}^{9, p=2}$ & $x_1(a_1)x_2(a_2)x_{4}(a_{4})x_{7}(a_{7})$ &  \\
\cline{1-2}
$\cC_{1, 2, q^6}^{9}$  & $x_1(a_1)x_2(a_2)x_{7}(a_{7}) $ & \\
\cline{1-2}
$\cC_{4, q^6}^{9}$  & $x_4(a_4)x_5(a_5)x_6(a_6) $ & \\
\hline
\hline
$\cC_{1,3,4, q^5}^{10}$ & $x_3(a_3)x_1(a_1)x_4(a_4)$ & \multirow{5}{*}{$ u_{10}^G$}\\
\cline{1-2}
$\cC_{2, 3, q^5}^{10}$  & $x_3(a_3)x_2(a_2)x_{9}(a_{9})$ &      \\
\cline{1-2}
$\cC_{1,2,4,q^6}^{10, p=2}$ & $x_1(a_1)x_2(a_2)x_{4}(a_{4})x_{6}(a_{6})$ &  \\
\cline{1-2}
$\cC_{1, 4, q^6}^{10}$  & $x_1(a_1)x_4(a_4)x_{6}(a_{6}) $ & \\
\cline{1-2}
$\cC_{2, q^6}^{10}$  & $x_2(a_2)x_5(a_5)x_7(a_7) $ & \\
\hline
\hline
$\cC_{2,3,4, q^5}^{11}$ & $x_3(a_3)x_2(a_2)x_4(a_4)$ & \multirow{5}{*}{$ u_{11}^G$}\\
\cline{1-2}
$\cC_{1, 3, q^5}^{11}$  & $x_3(a_3)x_1(a_1)x_{10}(a_{10})$ &      \\
\cline{1-2}
$\cC_{1,2,4,q^6}^{11, p=2}$ & $x_1(a_1)x_2(a_2)x_{4}(a_{4})x_{6}(a_{2}a)x_{7}(a_{4}a)$ &  \\
\cline{1-2}
$\cC_{2, 4, q^6}^{11}$  & $x_2(a_2)x_4(a_4)x_{5}(a_{5}) $ & \\
\cline{1-2}
$\cC_{1, q^6}^{11}$  & $x_1(a_1)x_6(a_6)x_7(a_7) $ & \\
\hline
\hline 
$\cC_{1,2,3, q^5}^{12}$ & $x_3(a_3)x_1(a_1)x_2(a_2)x_{10}(a_{10})$ & 
    \multirow{7}{*}{$u_{12}^G$} \\
\cline{1-2}
 $\cC_{1,3,4, q^5}^{12}$ & $x_3(a_3)x_1(a_1)x_4(a_4)x_{10}(a_{10})$
   &   \\
\cline{1-2}
$\cC_{2,3,4, q^5}^{12}$ & $x_3(a_3)x_2(a_2)x_4(a_4)x_{9}(a_{9})$ & 
     \\
\cline{1-2}
$\cC_{1, 2, 4, q^6}^{12, p=2}$  &  $x_1(a_1)x_2(a_2)x_{4}(a_{4})x_{6}(a_6^*)x_{7}(a_7^*) $ %(see above)
&     \\ 
\cline{1-2}
$\cC_{1, 2, q^6}^{12}$  & $x_1(a_1)x_2(a_2)x_{6}(a_{6})x_{7}(a_{7}) $ &      \\
\cline{1-2}
$\cC_{1, 4, q^6}^{12}$  & $x_1(a_1)x_4(a_4)x_{6}(a_{6})x_{7}(a_{7}) $ &     \\
\cline{1-2}
$\cC_{2, 4, q^6}^{12}$  & $x_2(a_2)x_4(a_4)x_{5}(a_{5})x_{7}(a_{7}) $ &    \\ 
\hline
\hline
  $\cC_{1,2,3,4}^{13}$ &  $x_1(a_1)x_2(a_2)x_3(a_3)x_4(a_4)$  & $ u_{13}^G$ \\
\hline
\hline
  $\cC_{1,2,3,4}^{14}$ &  $x_3(a_3)x_1(a_1)x_2(a_2)x_4(a_4)x_{10}(a_2a_3a_4\mu)$  & $u_{14}^G$ \\
\end{longtabu}
\end{small}
\end{center}

\vspace{-14mm}


\begin{thebibliography}{99}

\bibitem[BR03]{BR03} C. Bonnafé and R. Rouquier, \emph{Catégories dérivées et variétés de Deligne-Lusztig}, Publications mathématiques de l'IHÉS \textbf{97} (2003), no.\ 1, 1--59.

\bibitem[BG14]{BG14} J. D. Bradley and S. M. Goodwin, \emph{Conjugacy classes in Sylow $p$-subgroups of finite Chevalley groups in bad characteristic}, Comm. Algebra \textbf{42} (2014), 3245--3258. 

\bibitem[Car]{Car} R. W. Carter, {\em Finite groups of Lie type: conjugacy classes and complex characters}, Wiley Classic Library, 1993, Wiley. 

\bibitem[CK17$^+$]{CK17$^+$} D. Craven and R. Kessar, 
\emph{The Brauer trees of finite groups}, in preparation. 

\bibitem[DL76]{DL76} P. Deligne, G. Lusztig, \emph{Representations of reductive groups over finite fields}, Ann. of Math. \textbf{103} (1976), 103–161.

\bibitem[Dud13]{Dud13} O. Dudas, \emph{A note on decomposition numbers for groups of Lie type of small rank}, J. Algebra \textbf{388} (2013), 364--373.

\bibitem[Dud17]{Dud17} O. Dudas, \emph{Lectures on modular Deligne--Lusztig theory}, arXiv:1705.08234 (2017).

\bibitem[DM15]{DM15} O. Dudas and G. Malle, 
\emph{Decomposition matrices for low rank unitary groups}, Proc. London Math. Soc. \textbf{110} (2015), 1517--1557.

\bibitem[DM16]{DM16} O. Dudas and G. Malle, \emph{Decomposition matrices for exceptional groups at $d=4$}, J. Pure Appl. Algebra 
\textbf{220} (2016), 1096--1121. 

\bibitem[FS90]{FS90} P. Fong and B. Srinivasan, \emph{Brauer trees in classical groups}, J. Algebra \textbf{131} (1990), 179--225.

\bibitem[Gec93]{Gec93} M. Geck, \emph{Basic sets of Brauer characters of finite groups of Lie type II}, J. London Math. Soc. \textbf{47}
(1993), 255--268.

\bibitem[Gec94]{Gec94} M. Geck, \emph{Beitr\"age zur Darstellungstheorie von Iwahori-Hecke-Algebren}, Habilitationsschrift (1994). 

\bibitem[GH91]{GH91} M. Geck and G. Hiss, \emph{Basic sets of Brauer characters of finite groups of Lie type}, J. reine angew. Math. 
\textbf{418} (1991), 173--188. 

\bibitem[GH97a]{GH97a} M. Geck and G. Hiss, 
\emph{Modular representations of finite groups of Lie type in non-defining characteristic}, Progr. Math. \textbf{141} (1994), 195--249. 

 \bibitem[GHL+96]{GHL+96} M. Geck, G. Hiss, F. L\"ubeck, G. Malle and G. Pfeiffer,
\emph{CHEVIE -- A system for computing and processing generic character tables for finite groups of Lie type,
Weyl groups and Hecke
  algebras}, Appl. Algebra Engrg. Comm. Comput. \textbf{7} (1996), 175--210.

\bibitem[GP92]{GP92} M. Geck and G. Pfeiffer, \emph{Unipotent characters of the Chevalley groups $\rD_4(q)$, $q$ odd}, Manuscripta Math. 
\textbf{76} (1992), no.\, 1, 281--304.

\bibitem[GLO17]{GLO17} S. Gonshaw, M. W. Liebeck and E. A. O'Brien, \emph{Unipotent class representatives for finite classical groups}, J. Group Theory \textbf{20} 
(2017), no.\ 3, 505--525.

\bibitem[GLM17]{GLM17} S. M. Goodwin, T. Le and K. Magaard, {\em The generic character table of a Sylow $p$-subgroup of a finite Chevalley group of type $D_4$},  Comm. Algebra \textbf{45} (2017), no.\ 2, 1--22. 

\bibitem[GLMP16]{GLMP16} S. M. Goodwin, T. Le, K. Magaard and A. Paolini, \emph{Constructing characters of Sylow $p$-subgroups of finite Chevalley groups}, J. Algebra \textbf{468} (2016), 395--439.

\bibitem[GLS3]{GLS3} D. Gorenstein, R. Lyons, R. Solomon,
\emph{The classification of the finite simple groups Number 3. Part I. Chapter A. Almost simple K-groups}, Mathematical Surveys and Monographs, 40.3. American Mathematical Society, Providence, RI, 1998.

\bibitem[GH97b]{GH97b} J. Gruber and G. Hiss, \emph{Decomposition numbers of finite classical groups for linear primes}, J. reine angew. Math. \textbf{485} (1997), 55--91.

\bibitem[Him11]{Him11} F. Himstedt, \emph{On the decomposition numbers of the Ree groups ${}^2\rF_4 (q^2)$ in non-defining characteristic}, J. Algebra \textbf{325} (2011), no.\ 1, 364--403.

\bibitem[HH13]{HH13} F. Himstedt, and S. C. Huang, \emph{On the decomposition numbers of Steinberg's triality groups ${}^3\rD_4(2^n)$ in odd characteristics}, Comm. Algebra \textbf{41} (2013), 

\bibitem[HLM11]{HLM11} F. Himstedt, T. Le and K. Magaard, {\em Characters of the Sylow $p$--subgroups of the Chevalley groups $D_4(p^n) $}, J.\ Algebra, {\bf 332} (2011), no.\ 1, 414--427.

\bibitem[HN14]{HN14} F. Himstedt and F. Noeske. \emph{Decomposition numbers of $\mathrm{SO}_7(q)$ and $\mathrm{Sp}_6(q)$}, J. Algebra \textbf{413} (2014), 15--40.

\bibitem[HN15]{HN15} F. Himstedt and F. Noeske. \emph{Restricting unipotent characters in special orthogonal groups}, LMS J. of Comput. and Math.  \textbf{18} (2015), no.\ 1, 456--488.

\bibitem[His89]{His89} G. Hiss, \emph{On the decomposition numbers of $\rG_2(q)$}, J. Algebra \textbf{120} (1989), 339--360.

\bibitem[Is]{Is} I. M. Isaacs, \emph{Character theory of finite groups}, Dover Books on Mathematics, New York, 1994. 

\bibitem[Jam90]{Jam90} G. James, \emph{The decomposition matrices of $\GL_n(q)$ for $n \le 10$}, Proc. London Math. Soc. \textbf{3} (1990),  no.\ 2, 225--265.

\bibitem[JLPW]{JLPW} C. Jansen, K. Lux, R. A. Parker, R. A. Wilson, \emph{An ATLAS of Brauer Characters}, Oxford University Press, Oxford, 1995.

\bibitem[KM15]{KM15}  R. Kessar, G. Malle, \emph{Lusztig induction and $\ell$-blocks of finite reductive groups}, Pacific J. Math. \textbf{279} (2015), 269--298.

\bibitem[LM15]{LM15} T. Le and K. Magaard, {\em On the character degrees of Sylow $p$-subgroups of Chevalley groups $G(p^f)$ of type $E$}, Forum Math.\ {\bf 27} (2015), no.\ 1, 1--55.

\bibitem[MT]{MT} G. Malle, D. Testerman, \emph{Linear algebraic groups and finite groups of Lie type}, Cambridge University Press, 2011.

\bibitem[OW98]{OW98} T. Okuyama and K. Waki, \emph{Decomposition numbers of $\Sp(4, q)$}, J. Algebra \textbf{199} (1998), no.\ 2, 544--555.

\bibitem[OW02]{OW02} T. Okuyama and K. Waki, \emph{Decomposition numbers of $\SU_3(q^2)$}, J. Algebra \textbf{255} (2002), 258--270.

\bibitem[Pao16]{Pao16} A. Paolini, \emph{The irreducible characters of Sylow $p$-subgroups of split finite groups of Lie type}, Ph.D. Thesis, 2016, University of Birmingham, http://etheses.bham.ac.uk/6716/.

\bibitem[Pui90]{Pui90} L. Puig. \emph{Algebres de source de certains blocs des groupes de Chevalley}, Asterisque \textbf{181--182} (1990), 
no.\ 9, 221--236.

\bibitem[Wak04]{Wak04} K. Waki, \emph{A note on decomposition numbers of $\rG_2(2^n)$}, J. Algebra \textbf{274} (2004), 602--606.

\bibitem[Whi90a]{Whi90a} D. L. White, \emph{The $2$-decomposition numbers of $\Sp(4,q)$, $q$ odd}, J. Algebra \textbf{131} (1990), 703--725.

\bibitem[Whi90b]{Whi90b} D. L. White, \emph{Decomposition numbers of $\Sp(4,q)$ for primes dividing $q \pm 1$}, 
J. Algebra \textbf{132} (1990), 488--500.

\bibitem[Whi95]{Whi95} D. L. White, \emph{Decomposition numbers of $\Sp_4(2^a)$ in odd characteristics}, J. Algebra \textbf{177} (1995), 264--276.

\bibitem[Whi00]{Whi00} D. L. White, \emph{Decomposition numbers of unipotent blocks of $\Sp_6(2^a)$ in 
odd characteristics}, J. Algebra \textbf{227} (2000), 172--194.

\end{thebibliography}
\end{document}